\title{Making  cobordisms  symplectic} 
\author{Yakov Eliashberg\thanks{Partially supported by the NSF grants DMS-1505910 and DMS-1807270} \\ Stanford University  \and   Emmy Murphy\thanks{Partially supported by the NSF grant DMS-1906564}
\\Northwestern University} 
\date{}  
\let\oldmarginpar\marginpar
\renewcommand\marginpar[1]{\-\oldmarginpar[\raggedleft\footnotesize #1]%
{\raggedright\footnotesize #1}}
\theoremstyle{plain}
\newtheorem{theorem}{Theorem}[section]
\newtheorem{thm}[theorem]{Theorem}
\newtheorem{corollary}[theorem]{Corollary}
\newtheorem{cor}[theorem]{Corollary}
\newtheorem{prop}[theorem]{Proposition}
\newtheorem{lemma}[theorem]{Lemma}
\theoremstyle{remark}
\newtheorem{remark}[theorem]{Remark}
\newtheorem*{remark*}{Remark}
\newtheorem*{example*}{Example}
\theoremstyle{definition}
\newtheorem*{definition}{Definition} 
\newcommand{\wt}{\widetilde}
\newcommand{\wh}{\widehat}
\newcommand{\oLambda}{\overline{\Lambda}}
\newcommand{\p}{\partial}
\newcommand{\om}{\omega}
\newcommand{\eps}{\varepsilon}
\newcommand{\Z}{{\mathbb{Z}}}
\newcommand{\R}{{\mathbb{R}}}
\newcommand{\C}{{\mathbb{C}}}
\newcommand{\st}{{\rm st}}
\newcommand{\Int}{{\rm Int\,}} %Interior
\renewcommand{\max}{{\rm max}}
\newcommand{\std}{{\rm std}}
\newcommand{\Id}{\mathrm {Id}}
\newcommand{\Morse}{\mathrm{Morse}}
\newcommand{\Ker}{\mathrm{Ker\,}}
\newcommand{\ext}{\mathrm{ext}}
\newcommand{\intr}{\mathrm{int}}
\newcommand{\x}{\times}
\newcommand{\sm}{\setminus}
\def\Op{{\mathcal O}{\it p}\,}
\numberwithin{figure}{section}
\begin{document}
\maketitle
\begin{abstract}
We establish an  existence $h$-principle for symplectic cobordisms of dimension $2n>4$ with concave overtwisted contact boundary. \end{abstract}

%\tableofcontents
\section{Introduction}\label{sec:intro}

 \subsubsection*{Symplectic cobordisms}
 %%%%
  We say that $(W,\om,\xi_-,\xi_+)$  is a {\em  symplectic cobordism} between  contact manifolds $(\p_\pm W,\xi_\pm)$ if 
  \begin{description}
  \item{-}  $W$ is a smooth   cobordism between $\p_- W$ and $\p_+W$, and
 \item {-}  $\omega$ is a symplectic form which admits   a  Liouville vector field $Z$ near $\p W$  such that $Z$ is inwardly transverse to $\p_-W$,  outwardly transverse to $\p_+W$ and the contact forms $\lambda_\pm=\iota(Z)\omega|_{\p_\pm W}$ define the contact structures $\xi_\pm$. 
  \end{description}   
  All cobordisms we consider in this paper are assumed to be connected while their boundaries  $\p_\pm W$ could be disconnected.
  We recall that  a vector field $Z$ is called {\em Liouville} for  a symplectic form $\omega$ if    $d(\iota(Z)\omega)=\omega$.
  
  By a {\em Liouville cobordism } $(W,\lambda,\xi_-,\xi_+)$ between  contact manifolds $(\p_\pm W,\xi_\pm)$ we mean  a symplectic cobordism $(W,\om)$ between $(\p_\pm W,\xi_\pm)$   
  with a fixed primitive $\lambda$, $d\lambda=\omega$, so that the Liouville vector field $Z$ (defined by being the $\omega$-dual to $\lambda$) is inwardly transverse to $\p_-W$ and outwardly transverse to $\p_+W$.
   %%%%%%
 
An obvious  necessary condition for finding a symplectic cobordism structure between contact manifolds $(\p_-W, \xi_-)$ and $(\p_+W,\xi_+)$ on a fixed smooth cobordism $W$ is the  existence of an {\em almost symplectic} cobordism structure; that is     a non-degenerate  but not necessarily closed $2$-form $\eta$ on ${W}$ which coincides   with $d\lambda_{\pm}$ near $\p_\pm W$.

We call such $\eta$ an {\it almost} symplectic  cobordism structure. The homotopy class of an almost symplectic structure  is determined by the homotopy class   of an almost complex structure $J$, such that $J$ is tamed by $\eta$,   and 
$\xi_{\pm}$  are $J$-complex subbundles.

 Note that almost symplectic forms $\eta $ and $f\eta$, where  a $f$ is a positive function, are canonically homotopic. Hence,  given two almost symplectic form $\eta$ and $\wt\eta$   on a manifold $M$ which coincide on $TM|_A$  for a closed subset $A\subset M$, we say,  slightly abusing the terminology, that  $\eta$ and $\wt \eta$ are homotopic relative $A$  if they can be connected by a homotopy of almost symplectic forms $\eta_t$ such that $\eta_t|_{TM|_A}=f_t\eta|_{TM|_A}$ where $f_t:A\to\R$ is family of positive  functions.

The following theorem is the  main result of the paper:
\begin{thm}\label{thm:cobord}
Let $(W,\eta)$ be an almost symplectic   cobordism  of dimension $2n>2$ between non-empty contact manifolds  $(\p_\pm W,\xi_\pm)$. 
%Let  $\eta$ be an almost symplectic cobordism between $(\p_-W, \xi_-)$ and $(\p_+W,\xi_+)$. 
Suppose that
\begin{itemize}
 \item the contact structure $\xi_-$ is overtwisted on at least one of the
 connected components of $\p_-W$;
  \item if $n=2$  the contact structure $\xi_+$ is overtwisted  on at least one of the 
 connected components of $\p_+W$. 	 
  \end{itemize}
 Then there exists a Liouville cobordism structure $(W,\lambda,\xi_-,\xi_+)$  such that $d\lambda$  and $\eta$ are homotopic  as almost symplectic structures fixed on  $\p W$.
 \end{thm}
 As  Proposition \ref{4-concordance} below shows, the condition
that   the positive end of a $4$-dimensional cobordism has an overtwisted component is essential.
  On the other hand, Theorem \ref{thm:cobord} implies
 \begin{cor}\label{cor:4d}
 Let $W$ be a $4$-dimensional manifold with non-empty boundary $\p W$,  and let $\eta$ be  an almost symplectic form   such that  $\eta|_{\Op\p W}=d\lambda$  for a Liouville form $\lambda$ whose Liouville field is outwardly transverse  to $\p W$.  Then the (interior) connected sum $W':=W\#(S^3\times[0,1])$ admits a Liouville cobordism structure $\Lambda$ with $\p_+W':=\p W\amalg (S^3\times 1), \p_-W=S^3\times 0$ and $\Lambda|_{\Op\p W}=\lambda$.
 \end{cor}
 
  \begin{remark}\label{rem:at-least-one} 
 Note that  any two odd-dimensional manifolds which are  endowed with a stable almost complex structures are cobordant in an almost complex category, see \cite{Milnor, Nov60}. In particular, for any  contact manifolds  $(\p_\pm W,\xi_\pm)$ an almost symplectic cobordism $(W,\eta)$, as in Theorem \ref{thm:cobord}, always exists. 
  \end{remark}
Any stable almost complex structure on an odd-dimensional manifold is realized by an overtwisted contact structure, see \cite{BEM}. Thus the above theorem implies that any smooth cobordism $W$ with non-empty boundaries that admits an almost symplectic structure also admits a structure of a symplectic cobordism between two contact manifolds. 
 
 The notion of an {\em overtwisted} contact structure was introduced in
 \cite{Eli89} in dimension $3$  and   extended to all dimensions in \cite{BEM}. Without giving precise definitions, which  will be not important for the purposes of this paper, we summarize below  the main   results about overtwisted contact structures which will be used in this paper. Theorem \ref{thm:ot-properties}     is proven in  \cite{BEM} for   $n>1$ and in \cite{Eli89} for $n=1$. Theorem \ref{thm:ot-thick} is proven in  \cite{CaMuPr}. If $\xi=\{\alpha=0\}$ is an overtwisted contact structure, we will refer to $\alpha$ as an overtwisted contact form.

 All contact structures which appear in this paper will be coorientable. We recall that   a cooriented almost contact structure 
 is a  cooriented hyperplane field $\zeta \subset TM$ equipped with a  linear  symplectic structures $\eta$ on it, which is defined up to a positive conformal factor.  Cooriented almost contact structures  can be  represented by  pairs
$(\alpha,\eta)$ where $\alpha$ is a  non-vanishing 1-form on $M$ and $\eta$  is a non-degenerate two-form on the hyperplane field $\xi=\{\alpha=0\}$.  We note that given a maximal ($=\dim M-1$) rank   2-form $\eta$ on $M$ the corresponding 1-form  $\alpha$ can be reconstructed uniquely up to a contractible choice.   The existence of an almost contact structure is equivalent to the existence of a {\it stable almost complex structure} on $M$, i.e.\ a complex structure on the bundle $TM\oplus\eps^1$ where $\eps^1$ is the trivial line bundle over $M$.

 \begin{thm}\label{thm:ot-properties}  
 Let $\theta $ be a cooriented almost contact structure on a $(2n+1)$-dimensional manifold  $M$ which is  genuinely contact  on a neighborhood $\Op A$ of a closed subset $A\subset M$. Then there exists an overtwisted  contact structure $\xi$ on $M$  which coincides with $\theta$ on a neighborhood of $A$ and which is  homotopic   to $\eta$ through almost contact structures fixed on $A$.   Moreover, suppose we are  given  two  contact structures $\xi_0,\xi_1$ and a homotopy $\eta_t$, $t\in[0,1]$, of almost contact structures such that  
 \begin{itemize}
 \item[-] $\xi_0$, $\xi_1$ and $\eta_t$ coincide on $\Op A$ for all $t\in[0,1]$,
 \item[-] $\xi_0$ and $\xi_1$ are overtwisted on every connected component of $M\setminus A$. 
 \end{itemize} 
Then there exist
\begin{itemize}
\item[-] a homotopy $\xi_t$, $t\in[0,1]$, of genuine contact structures connecting $\xi_0$ and $\xi_1$, and 
\item[-] a $2$-parametric family $\Theta_{t,s}$ of almost contact structures
\end{itemize} such that 
\begin{itemize}
\item [-] $\Theta_{t,0}=\xi_t,\; \Theta_{t,1}=\theta_t,\;
\Theta_{0,s}=\xi_0\;\hbox{and}\; \Theta_{1,s}=\xi_1$ for all $t,s\in[0,1]$.
\item[-] $\Theta_{t,s}|_{\Op A}=\xi_0|_{\Op A}.$
\end{itemize}
 \end{thm}
 \begin{thm}\label{thm:ot-thick} 
   Let  $(M,\{\alpha=0\})$  be  an overtwisted contact manifold  and $(D_R,\mu)$ the disc of radius $R$ in $\R^2$ endowed with a Liouville form  $\mu =xdy-ydx$. Then for  sufficiently large values of  $R$, the product $(M\times D_R,\{\alpha\oplus\mu=0\})$ is also overtwisted.
  \end{thm}
%%%Added
%%%%%%%%%%%%%%%%%%%%%%%%%%%%%

 We note that any contact manifold of dimension $>1$ can be made overtwisted without changing its almost contact homotopy class by a modification in a neighborhood of one of its points, and that any  two definitions of overtwistedness for which Theorem \ref{thm:ot-properties}  holds are equivalent.  
 
 For any closed form $\om$ on $W$  equal to 
$d\lambda_\pm$  on $\p_\pm  W$  one  can canonically associate a cohomology class $[\om]\in H^2(W,\p W)$.  Indeed, take any $1$-form $\lambda$ on $W$ extending $\lambda_\pm$  Then    the cohomology class of $[\om-d\lambda]\in H^2(W,\p W)$ is independent of the choice of the extension $\lambda$.

 \begin{cor}\label{cor:cohomology-class}
 Under the assumptions of Theorem \ref{thm:cobord} there is a symplectic cobordism structure $\omega$ on $W$ between $\xi_-$ and $\xi_+$ so that $[\omega]$ is equal to any given cohomology class $a\in H^2(W,\p W)$.
 \end{cor}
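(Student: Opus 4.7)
The approach is to start from the Liouville cobordism produced by Theorem~\ref{thm:cobord} and then add a closed correction $2$-form representing $a$, using a large rescaling to preserve non-degeneracy.

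Concretely, apply Theorem~\ref{thm:cobord} to obtain a Liouville cobordism $(W,\lambda,\xi_-,\xi_+)$ with symplectic form $\omega_0=d\lambda$. Since $\omega_0$ admits a global primitive $\lambda$ extending the boundary contact forms $\lambda_\pm=\iota(Z)\omega_0|_{\p W_\pm}$, its relative class satisfies $[\omega_0]=0\in H^2(W,\p W)$. Next, represent the prescribed class $a\in H^2(W,\p W)$ by a closed $2$-form $\beta$ on $W$ which vanishes in a collar neighborhood of $\p W$; this is possible by standard relative de Rham theory. For a positive constant $C>0$ set
$$\omega_C = C\omega_0 + \beta.$$

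We then verify the conclusions of the corollary. Closedness of $\omega_C$ is immediate. Since $\beta$ vanishes near $\p W$, we have $\omega_C=d(C\lambda)$ there, so the Liouville vector field $Z$ of $\omega_0$ is also Liouville for $\omega_C$ near $\p W$, inward transverse to $\p_-W$ and outward transverse to $\p_+W$, and the resulting contact forms $C\lambda_\pm=\iota(Z)\omega_C|_{\p W_\pm}$ still define $\xi_\pm$. The expansion
$$\omega_C^n = C^n\omega_0^n + nC^{n-1}\omega_0^{n-1}\wedge\beta + \cdots$$
together with compactness of $W$ and the fact that $\omega_0^n$ is a nowhere-vanishing volume form shows that $\omega_C$ is non-degenerate for $C$ sufficiently large. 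Hence $(W,\omega_C,\xi_-,\xi_+)$ is a symplectic cobordism, and its relative cohomology class, computed using the global extension $C\lambda$ of the boundary primitives $C\lambda_\pm$, equals $[\omega_C-d(C\lambda)]=[\beta]=a$.

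The construction presents no real obstacle beyond invoking Theorem~\ref{thm:cobord}. The only subtlety is that the boundary primitives associated to $\omega_C$ are the rescaled contact forms $C\lambda_\pm$ rather than $\lambda_\pm$; this is harmless since these define the same contact structures $\xi_\pm$, and the point of the scaling is precisely to ensure both that the added closed form $\beta$ does not destroy symplecticity and that the collar $\p W$ data remains a genuine Liouville end.
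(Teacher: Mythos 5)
Your proof is correct and is essentially the same as the paper's one-paragraph argument: apply Theorem~\ref{thm:cobord} to get a Liouville form $\lambda$, choose a closed compactly supported (equivalently, vanishing near $\partial W$) $2$-form representing $a$, and take $C\,d\lambda + \beta$ for $C$ large. You merely spell out the non-degeneracy estimate and the bookkeeping about $C\lambda_\pm$ versus $\lambda_\pm$, which the paper leaves implicit.
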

 Indeed, let $\lambda$ be the Liouville form provided by Theorem \ref{thm:cobord}, and let $\sigma$ be  any closed form with compact  support  representing $a\in H^2(W,\p W)$. Then for a sufficiently large constant $C$  the form $\omega = Cd\lambda+\sigma$ is symplectic and has the required properties.
 \medskip
 
It was proven  in \cite{CaMuPr}, see also   Corollary \ref{cor:concordance} below, that if $n>2$ then for {\em any} contact structure $\xi$ on $\p_+W$ there is a Liouville concordance (i.e. a Liouville cobordism diffeomorphic to $\p_+W\times[0,1]$) between $\xi$ on the positive end and an overtwisted contact structure $\xi_{\rm ot}$ on the negative end. This is not the case for $n=2$, as Proposition \ref{4-concordance} below  demonstrates. Hence, if $n>2$ it is sufficient to prove Theorem \ref{thm:cobord} for the case when both contact structures $\xi_\pm$ are overtwisted. This is the only reason why we need an additional hypothesis when $n=2$.

 \subsubsection*{Symplectic manifolds with a conical singularity}
  Given a   $2n$-dimensional manifold $X$ with boundary $\p X$, a symplectic form  $\omega$ on    $X\setminus p$, $p\in X$,  and a contact structure $\xi$ on $\p X$, we say that $(X,\omega,\xi)$ is  
 a {\em symplectic domain   with a conical singularity at $p$ and contact boundary $(\p X,\xi)$} if  $(\p X,\xi)$ is a positive contact boundary in the above sense, and near  
 $p$ there exists a Liouville field $Z$  and a Morse function $\phi$ with the minimum at $p$   such that $d\phi(Z)>0$.   In other words,  in  a punctured  ball centered at $p$ the form $\omega$   is symplectomorphic to the negative part of the symplectization of a contact structure $\zeta$ on the boundary sphere
$S^{2n-1}$. We will call $(S^{2n-1},\zeta)$ the \emph{link} of the singularity $p$.
By removing from $X$ a ball $B$ centered at $p$, we can equivalently view a symplectic structure $\om$ on $X$ with a conical singularity at $p$   as a symplectic cobordism structure on 
$\dot{X}=X\setminus\Int B$  between   $(\p_-\dot X:=\p B,\zeta)$  and $(\p_+\dot X:=\p X,\xi)$.

Of course, if the contact structure  $\zeta$ is standard, then the form extends to a non-singular symplectic form on the whole  $X$. Note that if $\zeta$ is overtwisted then according to
Theorem \ref{thm:ot-properties} $\zeta$ is uniquely determined up to isotopy by the homotopy class of  $\om$  in the space of non-degenerate $2$-forms on  a punctured neighborhood of $p$.  

The following result is a special case of Theorem \ref{thm:cobord}.
\begin{thm}\label{thm:conical-sing}
Let $X$ be a  compact   $2n$-dimensional manifold  with  non-empty boundary $\p X$,  and let $\xi$ a   contact structure on $\p X$.  Let $\eta$ be a non-degenerate $2$-form on $X\setminus p$, $p\in X$, which  is equal to $d\lambda$ near $\p X$,  where $\lambda$ is any 1-form satisfying $\xi=\{\lambda|_{\p X}=0\}$ and such that the Liouville  vector field dual to $\lambda$ is outward transverse to $\p X$.
  Let 
 $a\in H^2(X,\p X)$  be  a relative  cohomology class. If $n=2$ assume, in addition, that $\xi$ is overtwisted. Then there exists  a    symplectic structure $\om$ on $X$ with a conical singularity at $p$ with an overtwisted link $(S^{2n-1},\zeta)$ and positive contact boundary $(\p X,\xi)$,   such that   the cohomology class $[\om]\in H^2(X\setminus p,\p X)=H^2(X,\p X)$ coincides with $a$,  and the homotopy class of $\om|_{X\setminus p}$ as a non-degenerate form   coincides with the  rel. $\p X$ homotopy class of $\eta$.  
\end{thm}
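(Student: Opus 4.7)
\textbf{Proof plan for Theorem \ref{thm:conical-sing}.} The strategy is to excise a small ball around $p$ and reduce to the main Theorem \ref{thm:cobord}. Pick a small closed ball $B\subset X$ centered at $p$, contained in a chart on which $\eta$ remains non-degenerate, and set $\dot X := X\setminus\Int B$. Then $\dot X$ is a smooth cobordism with positive boundary $\partial X$ and negative boundary $\partial B\cong S^{2n-1}$; the relative cohomology class $a\in H^2(X,\partial X)$ pulls back to a class $\dot a\in H^2(\dot X,\partial\dot X)$ via the canonical isomorphism valid for $2n>2$.

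Next, equip $S^{2n-1}$ with an overtwisted contact structure in the correct homotopy class and modify $\eta$ so that it becomes Liouville near $\partial B$. The form $\eta$ and the outward radial vector field $\partial_r$ determine an almost contact structure $\zeta_0$ on $\partial B$, namely the hyperplane field $\ker(\iota_{\partial_r}\eta|_{\partial B})$ with the conformal symplectic structure induced by $\eta$. By Theorem \ref{thm:ot-properties}.1 there exists an overtwisted contact structure $\zeta$ on $S^{2n-1}$ homotopic to $\zeta_0$ as almost contact structures. Using this homotopy, a Moser-type interpolation in a small collar of $\partial B$ replaces $\eta$ by an almost symplectic form $\tilde\eta$ on $\dot X$ which agrees with $\eta$ outside this collar and equals $d(e^t\alpha)$ in a smaller collar, where $\alpha$ is a contact form defining $\zeta$ and $t$ is the outward normal coordinate pointing into $\dot X$. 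The forms $\tilde\eta$ and $\eta$ are homotopic as almost symplectic forms on $X\setminus p$ rel $\partial X$.

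Now $(\dot X,\tilde\eta)$ is an almost symplectic cobordism from the overtwisted $(S^{2n-1},\zeta)$ to $(\partial X,\xi)$; when $n=2$ the positive end $\xi$ is overtwisted by hypothesis, so Theorem \ref{thm:cobord} applies and produces a Liouville cobordism structure $\lambda$ with $d\lambda$ in the homotopy class of $\tilde\eta$ rel $\partial\dot X$. Applying Corollary \ref{cor:cohomology-class} upgrades $d\lambda$ to a symplectic form $\omega_0$ on $\dot X$ whose relative cohomology class is $\dot a$. Because the Liouville vector field is inward-transverse to $\partial B$, the symplectic germ of $\omega_0$ near $\partial B$ is identified with a piece of the positive symplectization of $(S^{2n-1},\zeta)$; glue in the negative symplectization cone $((0,1]\times S^{2n-1},d(e^t\alpha))$ to extend $\omega_0$ across the puncture, obtaining a symplectic form $\omega$ on $X\setminus p$ with conical singularity at $p$, overtwisted link $(S^{2n-1},\zeta)$, relative cohomology class $a$, and underlying almost symplectic form homotopic to $\eta$ rel $\partial X$.

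The hard step is the collar interpolation: turning an arbitrary almost symplectic germ near $p$ into a genuine symplectization germ of an overtwisted contact structure, while keeping the global homotopy class of the almost symplectic form fixed. The flexibility of overtwisted structures encoded in Theorem \ref{thm:ot-properties}.1 is what makes this reduction possible; the remaining work is packaged directly into Theorem \ref{thm:cobord} and Corollary \ref{cor:cohomology-class}.
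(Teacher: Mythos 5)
Your proof is correct and is exactly the reduction the paper intends: the paper simply remarks that Theorem \ref{thm:conical-sing} is a special case of Theorem \ref{thm:cobord}, relying on the observation (stated just above it) that a symplectic structure with a conical singularity at $p$ is the same thing as a symplectic cobordism structure on $X\setminus\Int B$ with negative contact end $(\partial B,\zeta)\cong(S^{2n-1},\zeta)$. You have filled in the routine steps the paper leaves implicit --- interpolating the almost-symplectic germ near $\partial B$ to a symplectization germ of an overtwisted $\zeta$ in the correct formal homotopy class, applying Theorem \ref{thm:cobord} together with Corollary \ref{cor:cohomology-class} to realize the given class in $H^2(X,\partial X)\cong H^2(\dot X,\partial \dot X)$, and regluing the negative symplectization cone over $\zeta$.
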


\begin{remark}{\rm 1.
In contrast with Theorem  \ref{thm:conical-sing}, the  construction of {\em non-singular symplectic structures} on $X$ is severely constrained.  For instance, according to  Gromov's theorem  in dimension $4$ and Eliashberg-Floer-McDuff's theorem in higher dimensions, see \cite{Gro85, McD91}, any  symplectic manifold $(X,\om)$ bounded by the standard contact sphere and  satisfying the condition
$[\om]|_{\pi_2(X)}=0$   has to be  diffeomorphic to a ball. 

\noindent 2.  It is interesting to compare the flexibility phenomenon for symplectic structures with   conical singularities with  overtwisted links,   exhibited in Theorem \ref{thm:conical-sing},   with a similar  flexibility phenomenon for  Lagrangian manifolds with conical singularities with loose Legendrian links, see   \cite{EliMur13}.
}
\end{remark}
Theorem \ref{thm:conical-sing} implies the following
\begin{cor}\label{cor:conn-sum}
Let $X$ be  a closed manifold of dimension $2n>4$ that admits an almost complex  structure on $X\setminus p$, $p\in X$. Let $a\in H^2(X)=H^2(X\setminus p)$ be any cohomology class. Then for any closed symplectic $2n$-dimensional  manifold $(Z,\omega)$,
the connected sum $X\# Z$ admits a symplectic form with a conical singularity in the cohomology class $a+C[\omega]$ for a sufficiently large constant $C>0$.
\end{cor}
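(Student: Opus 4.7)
The plan is to view $X\# Z$ as $(X\setminus B_X)\cup_{S^{2n-1}}(Z\setminus B_Z)$, put a singular symplectic structure on the $X$-piece via Theorem~\ref{thm:conical-sing}, and glue it to the restricted symplectic structure on the $Z$-piece after rescaling by $C$ to match contact forms.

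Pick a point $q\in X\setminus\{p\}$ with a small ball $B_X\subset X$ around it, and let $B_Z\subset (Z,\omega)$ be a Darboux ball; the connected sum is formed by identifying $\partial B_X\cong S^{2n-1}\cong\partial B_Z$. The Darboux ball equips $\partial B_Z$ with the standard contact structure $\xi_\std$, and we use $\xi_\std$ on $\partial B_X$ as well. Starting from the given almost complex structure on $X\setminus\{p\}$, homotope it near $\partial B_X$ so that $\xi_\std$ is a $J$-complex subbundle; this yields a non-degenerate $2$-form $\eta$ on $X\setminus(B_X\cup\{p\})$ equal to $d\lambda$ in a collar of $\partial B_X$, with the dual Liouville field outward transverse. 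For $2n>4$ one has $H^k(S^{2n-1})=0$ for $k=1,2$, so the long exact sequences of $(X\setminus B_X,\partial B_X)$ and $(X,X\setminus B_X)$ give $H^2(X\setminus B_X,\partial B_X)\cong H^2(X\setminus B_X)\cong H^2(X)$, and we arrange the relative class of $\eta$ to equal $a$.

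Since $n\ge 3$, the overtwistedness hypothesis in Theorem~\ref{thm:conical-sing} is vacuous, and that theorem produces a symplectic form $\omega_X$ on $X\setminus(B_X\cup\{p\})$ with conical singularity (with overtwisted link) at $p$, positive contact boundary $(\partial B_X,\xi_\std)$ carrying some contact form $\alpha_X$, and cohomology class $a$. On the other side, $C\omega|_{Z\setminus B_Z}$ realizes $\partial B_Z$ as a negative contact boundary of $Z\setminus B_Z$ with contact form $C\alpha_0$, where $\alpha_0$ is induced by the Darboux Liouville field. For $C$ large enough that $C\alpha_0>\alpha_X$ pointwise on $S^{2n-1}$, we glue the two pieces by inserting the symplectization slab
\[
\Sigma_C=\{(t,x)\in\R\times S^{2n-1}\ :\ 0\le t\le\log(C\alpha_0(x)/\alpha_X(x))\}
\]
equipped with $d(e^t\alpha_X)$; this matches $\omega_X$ at $t=0$ and $C\omega$ at $t=\log(C\alpha_0/\alpha_X)$, producing a smooth symplectic form $\Omega$ on $X\# Z\setminus\{p\}$ with conical singularity at $p$.

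Finally, Mayer--Vietoris for $(X\setminus B_X)\cup(Z\setminus B_Z)=X\# Z$ together with $H^k(S^{2n-1})=0$ for $k=1,2$ gives $H^2(X\# Z)\cong H^2(X)\oplus H^2(Z)$, and removing the point $p$ preserves $H^2$ for $2n>4$. By construction the $X$-component of $[\Omega]$ equals $a$ and the $Z$-component equals $C[\omega]$, so $[\Omega]=a+C[\omega]$. The main obstacle is the gluing step: the symplectization-collar interpolation only succeeds after one of the contact forms dominates the other, which is exactly where the largeness of $C$ enters.
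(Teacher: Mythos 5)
Your proof is correct and follows essentially the same approach as the paper: choose a ball in $X$, apply Theorem~\ref{thm:conical-sing} to $X\setminus B_X$ to get a singular symplectic structure with standard contact sphere boundary realizing the class $a$, and then implant this into $Z$ after rescaling $\omega$ by a large constant. Your version merely spells out the implanting step explicitly via the symplectization-slab interpolation and adds the Mayer--Vietoris bookkeeping that the paper leaves implicit.
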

\begin{proof}
Let $B$ be a ball centered at $p$. The obstruction to extending an almost complex structure $J$ from $X\setminus \Int B$ to $X$  is an element  $\alpha\in\pi_{2n-1}(SO(2n)/U(n))$. 
Choose two disjoint  balls $B_1, B_2\subset \Int B$ and define $J$ on a neighborhood $\Op\p B_1$ of $\p B_1$ as the push-forward of the standard complex structure on the boundary of a ball    $D\subset \C^n$ under an orientation preserving diffeomorphism of $h:\Op\p D\to\Op\p B_1$ such that
$h(\p D)=\ B_1$, and  $h$ sends the outward normal vector field to  the boundary $  D$ to the inward normal vector field  to the boundary of $  B_1$. Note that  $J$   does not extend to an almost complex structure on $B_1$, unless $n=1,3$. Let $\beta\in\pi_{2n-1}(SO(2n)/U(n))$ be the obstruction to this extension.
We furthermore   extend $J$ to $B\setminus \Int(B_1\cup B_2)$ and note  that the obstruction to extending  $J$ to $B_2$ is equal to $\alpha-\beta$.
 By construction the almost complex structure $J$ on $\Op\p B_1$ is  compatible  with the
 push-forward  $h_*\xi_{\std}$ of  the  standard contact structure $\xi_\std$ on $\p D$, co-oriented by the outward normal.  Next, we apply Theorem \ref{thm:conical-sing} to get a symplectic  form  $\omega_X$ on $X \setminus \Int B_1$   with a conical singularity at the center of $B_2$, so that $[\omega_X] = a$ and $\p_+(X \setminus \Int B_1, \omega_X) \cong (S^{2n-1}, \xi_\std)$.  Then $(X \setminus \Int B_1, \omega_X)$ can be implanted into the symplectic manifold $Z$,   after rescaling the symplectic form of $Z$ if necessary.
\end{proof}

As  we already pointed out above, when $n=2$, the assumption that  at 
least one of  components of the positive boundary is overtwisted is essential, as the following proposition proven in   \cite{MR, KM} demonstrates:
 
\begin{prop}\label{SW}\label{4-concordance}
Suppose  $(W,  \omega,\xi_-,\xi_+)$ is  an exact  
$4$-dimensional symplectic cobordism   and the contact structure $\xi_-$ is  symplectically fillable. Then there is no exact symplectic cobordism structure $(W,\wt\omega,\wt\xi_-, \xi_+)$   where $\omega$ and $\wt\omega$ are in the same homotopy class of almost symplectic forms  and $\wt\xi_-$ is overtwisted.
In particular, for any fillable contact manifold $(M, \xi)$ there is no symplectic concordance $(M\times[-1,1], \om)$ in either direction between  $(M, \xi)$ and $(M,\wt\xi)$ with   overtwisted $\wt\xi$.
\end{prop}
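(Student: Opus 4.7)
The plan is to argue by contradiction using the monopole Floer contact invariant and its naturality under exact symplectic cobordisms. Assume on the same smooth cobordism $W$ we have exact symplectic structures $(\omega,\xi_-,\xi_+)$ with $\xi_-$ fillable and $(\wt\omega,\wt\xi_-,\xi_+)$ with $\wt\xi_-$ overtwisted, lying in the same homotopy class of almost symplectic forms. Since such a homotopy gives a homotopy of tamed almost complex structures, the two forms determine a single Spin$^c$ structure $\mathfrak{s}$ on $W$, and in particular $\mathfrak{s}_{\xi_-}=\mathfrak{s}_{\wt\xi_-}$ on $\p_-W$, so that both contact invariants lie in the same monopole Floer group $\widehat{HM}^\bullet(-\p_-W,\mathfrak{s}|_{\p_-W})$.

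Next I would invoke the Kronheimer--Mrowka/Mrowka--Rollin package \cite{KM, MR}: each contact structure $\eta$ on a closed $3$-manifold $M$ has a contact class $c(\eta)\in\widehat{HM}^\bullet(-M,\mathfrak{s}_\eta)$ which vanishes when $\eta$ is overtwisted, is nonzero when $\eta$ is strongly symplectically fillable, and is natural under exact symplectic cobordisms: for any exact $(V,\beta)\colon(M_-,\eta_-)\to(M_+,\eta_+)$ the induced map
$$
\widehat{HM}(-V,\mathfrak{s}_\beta)\colon\widehat{HM}^\bullet(-M_+,\mathfrak{s}_{\eta_+})\to\widehat{HM}^\bullet(-M_-,\mathfrak{s}_{\eta_-})
$$
carries $c(\eta_+)$ to $c(\eta_-)$. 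Crucially this map is a smooth-and-Spin$^c$ invariant of $V$, independent of the particular exact form representing the homotopy class. Consequently $(W,\omega)$ and $(W,\wt\omega)$ induce the same map $\Phi$, and we obtain simultaneously $\Phi(c(\xi_+))=c(\xi_-)\neq 0$ and $\Phi(c(\xi_+))=c(\wt\xi_-)=0$, the desired contradiction.

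For the concordance corollary, take $W=M\times[-1,1]$. In the direction fillable-to-overtwisted ($\xi_-=\xi$, $\xi_+=\wt\xi$), gluing a strong filling of $\xi$ to the concordance produces a strong filling of the overtwisted $\wt\xi$, contradicting the classical Eliashberg--Gromov theorem with no gauge theory required. In the opposite direction, apply the main statement by comparing the hypothetical concordance from $\wt\xi$ to $\xi$ with the trivial symplectization $(M\times[-1,1],d(e^t\lambda))$ of $\xi$: the very existence of the hypothetical concordance forces $\wt\xi$ and $\xi$ to lie in the same almost contact class on $M$, and in dimension four this suffices (after the obvious rel-boundary Chern-class adjustment) to place the two forms in the same almost symplectic homotopy class, and the main statement gives the contradiction. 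The single genuinely deep input is the gauge-theoretic behavior of $c(\eta)$ (vanishing for overtwisted, non-vanishing for fillable, natural under exact cobordisms); once that is granted, the rest is formal TQFT bookkeeping.
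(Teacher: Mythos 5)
Your proof is correct and takes the same approach the paper itself points to: the paper gives no argument beyond citing \cite{MR,KM}, and what you have written is precisely the Kronheimer--Mrowka/Mrowka--Rollin contact-invariant argument that those references support, including the crucial observation that the $\widehat{HM}$-cobordism map depends only on the diffeomorphism type of $W$ together with the Spin$^c$ structure determined by the homotopy class of almost symplectic form.

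One small simplification worth noting for the concordance case in the direction from $\wt\xi$ to $\xi$: the ``rel-boundary Chern-class adjustment'' you invoke is not needed. Since $W=M\times[-1,1]$ deformation retracts onto $\p_+W$, once one homotopes the hypothetical concordance form near $\p_+W$ to agree with the trivial symplectization (possible since both are compatible with the same $\xi$ there), obstruction theory gives that the two almost symplectic structures are automatically homotopic rel $\Op(\p_+W)$ --- all relative obstruction groups vanish --- so they lie in a single almost symplectic class (and in particular induce the same Spin$^c$ structure on $W$) with no further adjustment. The rest of your argument, including the reduction of the fillable-to-overtwisted direction to Eliashberg--Gromov by gluing a strong filling, is fine.
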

Note that the non-existence of a symplectic cobordism
between a fillable contact structure on the negative end, and an overtwisted structure on the positive one is a universal fact which holds for all symplectic cobordisms in all dimensions, see \cite{Gro85, Eli91,Nieder, BEM}.

 We do not know whether the condition $\p_+W\neq\varnothing$ in Theorem \ref{thm:cobord} is essential when $n>2$. We note, however, that Theorem \ref{thm:cobord} implies that {\em every overtwisted contact manifold  $(M,\xi)$ of dimension $>3$ admits a symplectic cap, i.e. there exists a symplectic cobordism $(W,\om)$ with $\p_+W= \varnothing$ and    $\p_-(W,\om)=(M,\xi)$.}
 Indeed, the group  of complex bordisms is trivial in odd dimensions,
 see \cite{Milnor,Nov60}. Hence, according to Theorem \ref{thm:cobord} there is a symplectic cobordism between $(M,\xi)$ on the negative end and the standard contact sphere on the positive one, which then can be capped by any closed symplectic manifold, as in the proof of Corollary \ref{cor:conn-sum}. An existence of a symplectic cap for all (and in particular) overtwisted contact $3$-manifolds  was originally proven in \cite{EtHo02}, see also  \cite{Eli03} and \cite{Et04}.  However, these papers do not  give much on the  topology of the symplectic cap.

\subsubsection*{Conformal symplectic manifolds}  
A {\em   conformal symplectic structure} on a manifold $M$ is given by an atlas of symplectic charts
$(U_i, \om_i)$, such that the transition maps $f_{ij}$ are {\em conformally} symplectic, i.e. $f^*_{ij}\om_{i}=c_{ij}\om_j$  for positive constants $c_{ij}\in\R$.

Equivalently, a conformal symplectic structure can be defined   as a symplectic structure  with coefficients in a flat principal bundle with fiber the  multiplicative group $\R_+$ of positive real numbers; such a bundle can be described by   a representation $\theta:\pi_1(M)  \to \R_+$.   In other words, a conformal symplectic structure  on $M$ is a symplectic structure  $\omega$ on   the universal cover $\wh M$  of $M$ such that  the action of $\pi_1(M)$ by deck transformations on $\wh M$ satisfies $g^ *\om=\theta(g)\om$ for any $g\in\pi_1(M)$. 
The representation $\theta:\pi_1(M)\to\R_+$ factors through a  homomorphism $\overline\theta: H_1(M,\R)\to \R_+$, so that  $\mu:=\log\overline\theta: H_1(M,\R)\to \R$ is an additive homomorphism, which   therefore   defines a cohomology class
  in $H^1(M,\R)$   which we will denote  by $ \mu_\om$.
 Theorem \ref{thm:cobord} implies the following $h$-principle for conformal symplectic structures.
 \begin{theorem}\label{thm:conformal}
 Let $(M, \eta)$ be a closed $2n$-dimensional  almost symplectic manifold and  suppose $\mu\in   H^1(M,\Z)$ is a non-zero cohomology class. Then there exists a conformal symplectic structure $\om$  in the formal homotopy class of  $\eta$,   with $\mu_\omega=c\mu$ for some real $c\neq 0$.  \end{theorem}
  
\begin{proof}
 
There exists a smooth map $f:M\to S^1$ so that $\mu=f^*a$, where   $a$ is the generator of $H^1(S^1,\Z)$. Denote $\Sigma:=f^{-1}(p)$ for a regular value $p$ of $f$.
By cutting $M$ open along $\Sigma$ we get a cobordism $W$ with $\p_\pm W\cong\Sigma$.
Let us endow the  boundary components $\p_\pm W$ with copies of the unique overtwisted contact structure $\xi$ determined by the almost complex structure $J$.
  Theorem \ref{thm:cobord} yields a symplectic   cobordism structure $\omega$ on $W$ with the prescribed contact boundaries, and in  the relative homotopy class determined by $\eta$.  
We can assume that the contact structures on $\p_\pm W\cong\Sigma$  are  given by   contact
 forms $\alpha_{\pm}$, such that $\alpha_+=k\alpha_-$ for some constant  $k>0$.  
Hence by identifying the symplectic forms $k\om|_{\Op\p_-W}$ and $\om|_{\Op\p_+W}$ we get a conformal symplectic structure $\overline\om$ on $M$ corresponding to  the cohomology class $\mu_{\overline\om}=(\log k) \mu$. 
\end{proof}
\begin{remark} M. Bertelson and G. Meigniez informed us that  by further developing the  methods of this paper they proved in their forthcoming paper a stronger  version of Theorem \ref{thm:conformal}, establishing  existence of a conformal symplectic structure with any prescribed non-zero cohomology class $[\mu]\in H^1(M,\R)$.
\end{remark}
 \medskip 

\subsubsection*{Historical remarks}

The first constructions of symplectic cobordisms  between contact manifolds were based on the Weinstein handlebody construction, see \cite{Eli90,Wei91,CieEli12} and Theorem \ref{thm:Weinstein-case} below.  Examples of Liouville domains  with disconnected contact boundary (and hence non-Weinstein) were  constructed in \cite{McDuff} in dimension $4$, see also \cite{Mitsu}. This yields construction of Liouville domains of dimension $2n$ with     non-trivial homology up to dimension 
 $\sim
 \frac{3n}2$, see \cite{EliGro91}. 
   High-dimensional Liouville domains with disconnected boundary were constructed in  \cite{Geiges} and \cite{MaNiWe13}.   See  \cite{Gay07,We13, MaNiWe13} for more  constructions of  non-Weinstein symplectic  cobordisms. Related   problems  concerning  (different flavors of) symplectic fillability and the  topology of symplectic fillings were extensively studied, especially in the contact $3$-dimensional case.  See Chris Wendl's blog   \cite{CW-blog} 
 for a survey and a  discussion of   related  results and problems. 
  
  \subsubsection*{Acknoledgements}
  We thank Chris Wendl for providing some of the above references  and   Nikolai Mishachev for making  several pictures.
  We are also grateful to  the anonymous referees for constructive critical remarks and useful suggestions, and to Oleg Lazarev  and Dylan Cant for the attentive reading of the manuscript.  The authors are grateful to the following institutions for their support and hospitality: RIMS, Kyoto, and ITS ETH, Zurich (the first author), and IAS, Princeton (the second author).    

%%%%%%%%%%%%%%%%%%%%%%%%%%%%%%
\section{Topological and symplectic preliminaries}
\subsection{Sutured cobordisms}\label{sec:sutured}
\begin{definition}\label{def:sutured}
A {\em sutured} cobordism is a  compact manifold $W$ with boundary with corners together with a vector field $Z$ on $\Op\p W$ such that
\begin{itemize}
\item the boundary $\p W$ is presented as the union of two manifolds $\p_- W$ and $\p_+W$ with common boundary  $\p^2W=\p_+W\cap \p_-W$, along which  $\p W$  has a corner\footnote{i.e. each point of $\p^2W$ has a neighborhood diffeomorphic to $\R^{n-2}\times\{(x,y)\in\R^2;\, x,y\geq 0\}$, $n=\dim W$.};
\item the vector field $Z$ is inwardly transverse to $\p_-W$ and outwardly transverse to $\p_+W$.
\end{itemize}
\end{definition}

\begin{figure}[h]
\begin{center}
\includegraphics[scale=.6]{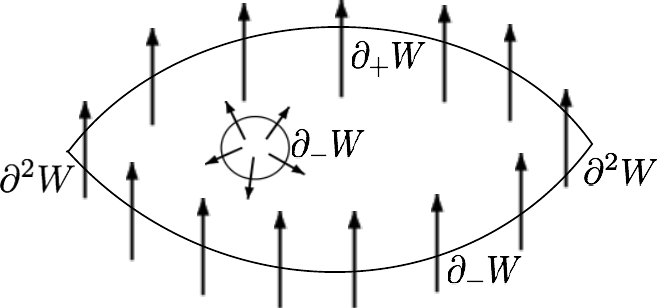}
\caption{A  sutured cobordism $W$.  Note that $\p_\pm W$ may be disconnected. In the figure, $\p_-W$ has one closed component and one component with boundary.}
\label{fig:corncob}
\end{center}  
\end{figure}

A traditional cobordism between closed manifolds $\p_-W$ and $\p_+W$ is a special case of a sutured cobordism when $\p^2W=\varnothing$. In the rest of the paper all considered  cobordisms are allowed to be sutured unless it is stated otherwise.
\begin{figure}[h]
\begin{center}
\includegraphics[scale=.6]{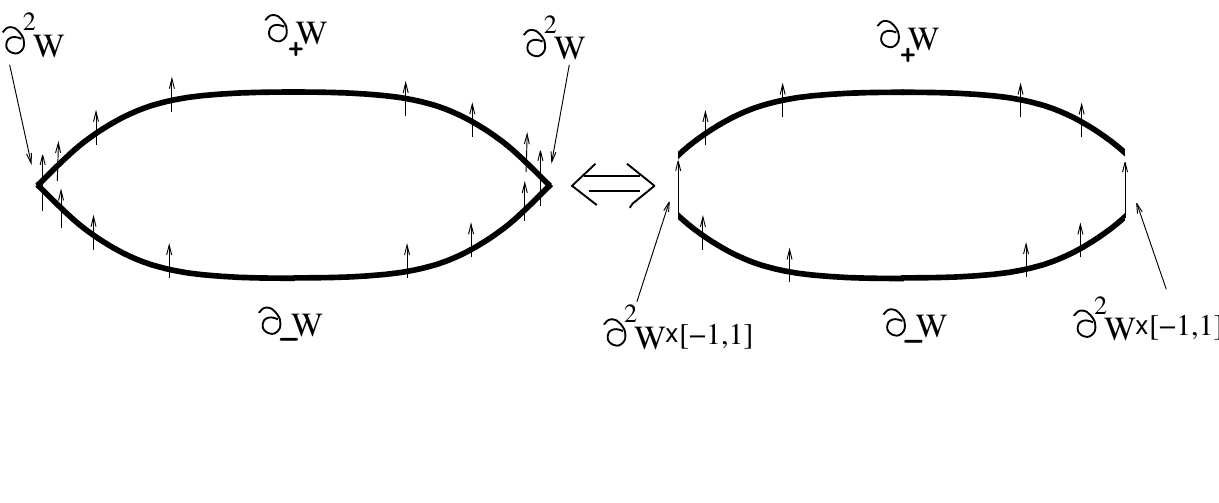}
\caption{Shaving off the corner of a sutured cobordism}
\label{fig:shaving}
\end{center}  
 \end{figure}

 \begin{figure}[h]
\begin{center}
 \includegraphics[scale=.6]{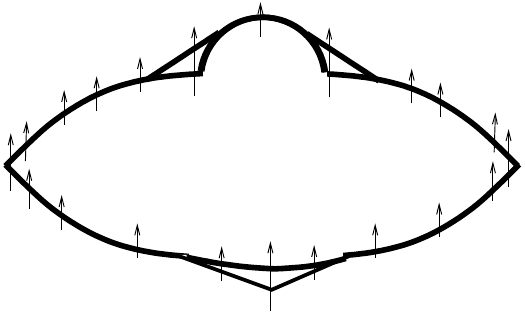} 
\caption{Smoothing non-essential corners of a sutured cobordism.}
\label{fig:smoothing}
\end{center}  
\end{figure}

 \begin{remark}\label{rem:cob} a) The sutured cobordism notion is parallel to the notion of a cobordism between manifolds with boundary, when the cobordism is required to be trivial over the boundary. Such a cobordism can be obtained from a sutured cobordism  by shaving off a neighborhood of the corner $\p^2W$ formed by trajectories of  the vector  field $Z$ originating from an open collar of $\p(\p_-W)=\p^2W$ in $\p_-W$, see Figure \ref{fig:shaving}. 
  
  To go  back to a sutured cobordism, one collapses the part $ \p(\p_-W)\times[0,1]$ of $\p W$ into $\p^2W$.
  In particular given a manifold $N$ with boundary we can  associate with the trivial cobordism $N\times[-1,1]$ its   sutured version $\wh N$ as follows.
  Choose a smooth function $\psi:N\to[0,1]$ such that $\p N$ equals the regular   level set $\{\psi=0\}$. Define
 $\wh N :=\{(x,t)\in N\times[-1,1]; -\psi(x)\leq t<\psi(x).\}$. We refer to $\wh N$ as the {\em sutured version} of the trivial cobordism $N\times[-1,1]$.

 \begin{figure}[h]
\begin{center}
\includegraphics[scale=.6]{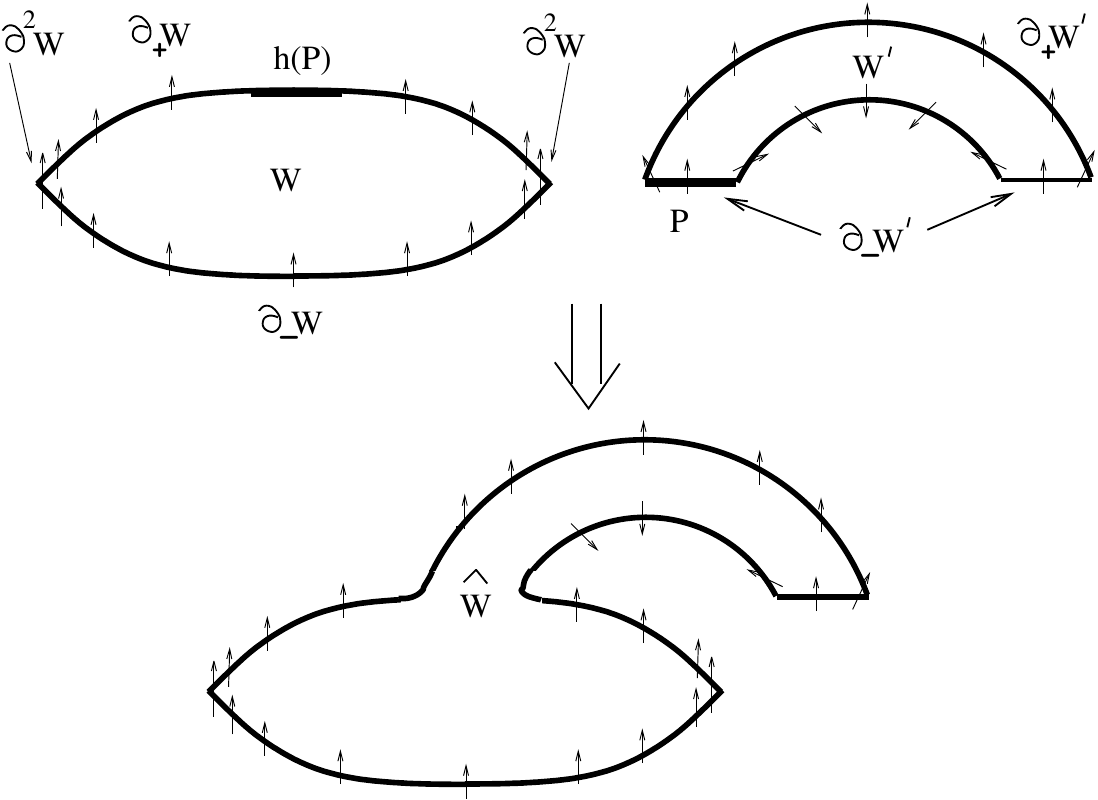}
\caption{concatenation of sutured cobordisms}
\label{fig:stacking-direct}
\end{center}  
\end{figure}
\begin{figure}
\begin{center}
\includegraphics[scale=.6]{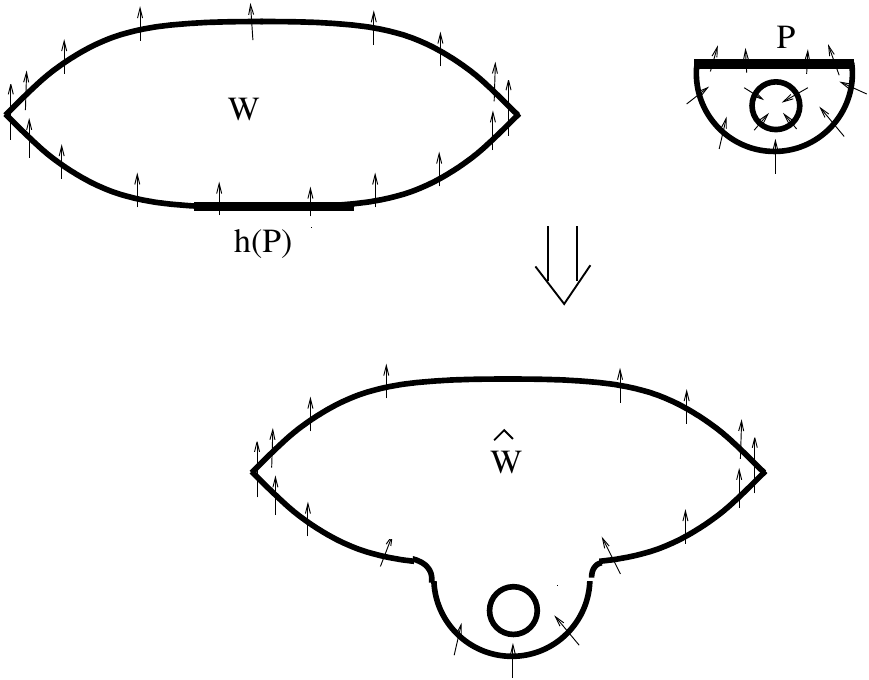}
\caption{Backward concatenation}
\label{fig:stacking-inverse}
\end{center}  
 \end{figure}

b)  The sutured cobordisms which   appear  in our constructions may sometimes have additional {\em non-essential corners} characterized  by  the property that the corresponding vector field $Z$ is transverse in the same sense (i.e. inwards or outwards) to all  boundary faces adjacent to the corner. Such a corner  can  always be removed  by  smoothing   the boundary $\p W$ in  a neighborhood of  the corner,   while keeping the boundary faces transverse to $Z$. See Fig. \ref{fig:smoothing}.

\end{remark}

 Let us discuss some operations one can perform on sutured cobordisms.  We do not specify below  the choice of the  required vector field  $Z$ when it is clear from the context or irrelevant. 

{\sl Concatenation.} Let $W$ and $W'$ be two sutured cobordisms 
and let $P$  be a connected  component of $\p_-W'$. Given an embedding
$h:P\to\Int\p_+W$ one can glue $W'$ to $W$ along $P$,  forming a manifold with boundary with corners  $\wh W:=W\mathop{\cup}\limits_h W'$. The smooth structure is determined by 
 matching the  vector fields $Z$ and $Z'$.
The resulting manifold $\wh W$ can be viewed as a sutured cobordism $$\left(\wh W,\;\p_-\wh W:=\p_-W\cup(\p_-W'\setminus P),\;\p_+\wh W:=(\p_+W\setminus \Int P)\cup\p_+W'\right)$$ (after smoothing all non-essential corners along $\p P$).
We say that $\wh W$ is a result of {\em concatenation} of $W$ and $W'$, see Figure \ref{fig:stacking-direct}.
 We will sometimes consider also an operation of  {\em backward concatenation}, see Figure~\ref{fig:stacking-inverse}. For that  we choose a connected   component  $P$ of $\p_+W'$ and an embedding
$h:P\to\Int\p_-W$,  then we glue $W'$ to $W$ along $P$, forming   $\wh W:=W'\mathop{\cup}\limits_h W$,
while also matching the   vector fields $Z'$ and $Z$.
The resulting manifold $\wh W$ can be viewed as a sutured cobordism 
$$(\wh W,\p_+\wh W:=(\p_+W'\setminus P)\cup\p_+W ,\p_-\wh W:=(\p_-W\setminus \Int P)\cup\p_-W'),$$  after smoothing all non-essential corner along $\p P$.

A special case of the concatenation operation is  handle attachment.  In that case $W'$ is the handle $D^k\times D^{n-k}$ with $\p_-W'=\p D^k\times D^{n-k}$, $\p_+W'=D^k\times\p D^{n-k}$ and the  and $P=\p_-W'$. To attach a handle  to another sutured cobordism W, we require
an embedding  $P=\p D^k\times D^{n-k}\to \Int\p_+W$, and then  apply the concatenation operation described above.
\footnote{Throughout the paper we use the letters $D$ and $B$ to denote closed and open  discs, respectively.
}

 The next   operation we will consider is another important instance of the concatenation operation.

\begin{figure}[h]
\begin{center}
 \includegraphics[scale=.55]{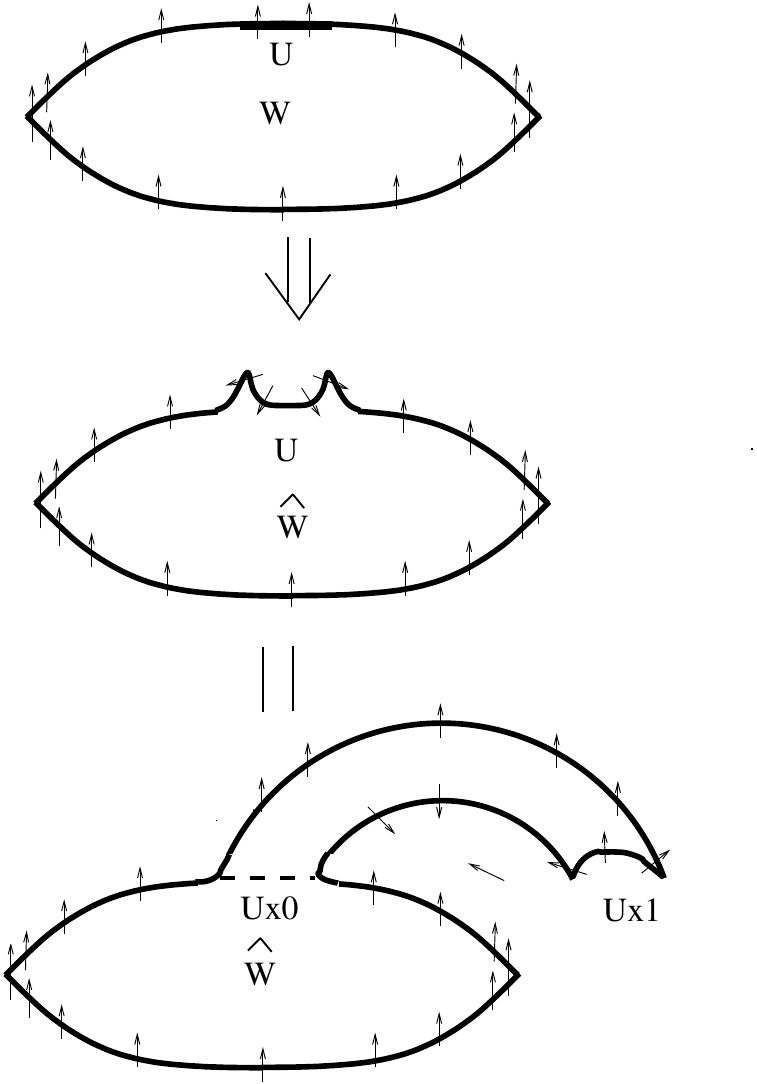} 
\caption{Inverting the domain $U \subset \p_+W$}
\label{fig:inversion}
\end{center}  
\end{figure}

{\sl Boundary inversion.}
 Let $(W; \p_-W,\p_+W)$  be   a   sutured  cobordism,  and let $U\subset \Int\p_+W$  be a  compact domain with smooth boundary. 
   Let us recall that the definition of a sutured cobordism $W$ involves a vector field $Z$ on $\Op\p W$ which is inwardly transverse to $\p_-W$ and outwardly transverse to  $\p_+W$. Let us modify $\p_+W$ by creating a corner along $\p U$ and changing $Z$     on $\Op U$ in such a way that   the new vector field $\wh Z$  is  inward transverse to $U$ and outward transverse to $\p_+W\setminus\Int U$. 
   We say that   the resulting  cobordism $$(\wt W=W, \p_-\wt W:=\p_-W\cup U,\p_+\wt W=\p_+W\setminus\Int U; \wt Z)$$   is obtained from
$(W; \p_-W,\p_+W)$ by {\em inverting}  the domain $U\subset\p_+  W$.  

The inversion operation can be equivalently described as concatenation as follows.
  Take $W':=U\times[0,1]$ and view it as a sutured cobordism $(W', \p_-W=(U\times 0\cup U\times 1),\p_+W:=\p U\times[0.1]).$ Let $\wh W$ be the result of concatenating $W$ and $W'$ by  identifying the component  $U\times 0\subset\p_-W'$ with $U\subset\Int\p_+W$. The cobordism $\wh W$, after smoothing non-essential corners,  is diffeomorphic to $\wt W$, see Figure \ref{fig:inversion}.

 {\sl Suture gluing.}
 Let $W, W'$ be sutured  cobordisms with sutures $S=\p^2W, S'=\p^2W'$. Consider domains $P\subset S, P'\subset S'$ with smooth boundaries. Given a diffeomorphism $g:P\to P'$ one can glue the cobordisms $W$ and $W'$ to get a sutured cobordism $\wh W$ with $\p_+\wh W=\p_+W\mathop{\cup}\limits_g\p_+W', \p_-\wh W=\p_-W\mathop{\cup}\limits_g\p_-W'$. The new suture $\wh S$ is equal to $\overline{S\setminus P}\mathop{\cup}\limits_g\overline{S'\setminus P'}$ which is obtained by shaving parts $P,P'$ of the sutures and gluing   the resulted boundary  components $P\times I$ and $P'\times I$. See Figure  \ref{fig:suture-gluing}.
  
\begin{figure}[h]
\begin{center}
 \includegraphics[scale=.65]{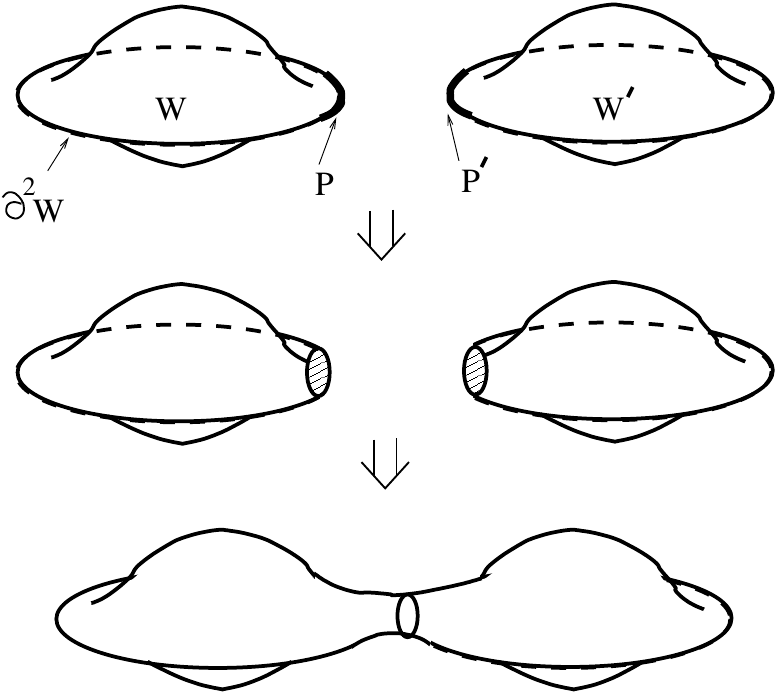} 
\caption{Suture gluing.}
\label{fig:suture-gluing}
\end{center}  
\end{figure}
 \medskip
 
A Morse  function $\phi:W\to\R$ on a sutured cobordism $(W,Z)$ is called {\em admissible} if  the vector  field  $Z$ is  gradient-like for  $\phi|_{\Op\p W}$. 
We stress the point  that  even when $\p^2W=\varnothing $, we {\em do not require $\phi$ to be constant on $\p_\pm W$} (in contrast with the usual set-up of Morse theory). If  $\p^2W=\varnothing $    one can always modify $\phi$ on $\Op \p W$  without creating additional critical points to make it constant on the boundary components.
\begin{definition}\label{def:Morse-type} Given a (sutured) cobordism $(W,\p_-W,\p_+W)$ we define    the {\em relative Morse type} $\Morse(W,\p_-W)$ of $(W,\p_-W)$
 as the minimal integer $m$ such that
$W$ admits an admissible Morse function $\phi:W\to\R$ whose critical points have index $\leq m.$ If $\p_-W=\varnothing$ we will write $\Morse(W)$ instead of $\Morse(W,\varnothing)$.  \end{definition}

 Note that   the pair  $(W,\p_+W)$ is $k$-connected, where $k=\dim W-\Morse(W,\p_-W)-1$.

\begin{lemma}\label{lm:Morse estimates}
\begin{enumerate}
\item Let $\wh W$ be the result of concatenation  or suture gluing of sutured cobordisms $W$ and $W'$.
Then $$\Morse(\wh W,\p_-\wh W)\leq\max(\Morse(W,\p_-W),\Morse(W',\p_-W')).$$
\item  Suppose that    the cobordism $(\wh W,\p_-\wh W,\p_+\wh W)$ is obtained from
$(W; \p_-W,\p_+W)$ by   inverting   the domain $U\subset\p_+  W$. Then
$$\Morse(\wh W,\p_-\wh W)\leq \max(\Morse(W,\p_-W),\Morse(U)+1).$$
\end{enumerate}
\end{lemma}
\begin{proof}
The proof of statement (i) is straightforward. Statement (ii) follows from (i) and the description of the boundary inversion as a concatenation of $W$ and $U\times I$, taking into account that $\Morse(U\times I,U\times\p I)\leq \Morse(U )+1.$
\end{proof}

\begin{figure}[h]
\begin{center}
 \includegraphics[scale=.55]{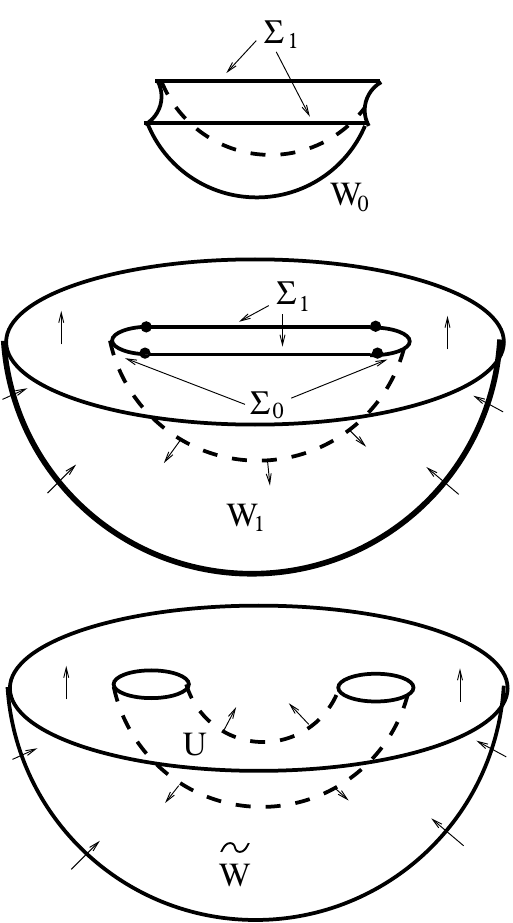} 
\caption{Building  cobordism $\wt W$}
\label{fig:drilling1}
\end{center}  
\end{figure}
\begin{lemma}\label{lm:remove}
Let $W$ be an $m$-dimensional sutured cobordism  and $N$ a $k$-dimensional, $k<m-1$, manifold with boundary. Let $\wh N$ be the sutured version of the trivial cobordism $N\times[-1,1]$, see Remark \ref{rem:cob}a). Suppose we are given an embedding $\phi:(\wh N,\p_+\wh N=N)\to(W,\Int \p_+W)$ which is transverse to $\p_+W$.
Let $U$ be a tubular neighborhood of $\p_- \wh N$ in $W$.  Consider $\wt W:=
\overline{W\setminus U}$, viewed as   a sutured cobordism with $\p_-\wt W=\p_-W\cup( \Sigma:=\overline{\p U\setminus\p_+W})$ and $\p_+\wt W:=\overline{\p_+W\setminus U}.$ Then
$$\Morse(\wt W,\p_-\wt W)\leq\max(\Morse(W,\p_-W),\Morse(N )+1).$$
  \end{lemma}
  \begin{proof}
     The cobordism $(\wt W,\p_-\wt W,\p_+\wt W)$ can be equivalently described as follows.  Choose any Riemannian metric on $\p_+W$ and  for a sufficiently small $\eps>0$ consider an $\eps$-neighborhood $V$ of $\phi(N)=\phi(\p_+\wh N)$ in $\p_+W$. The  neighborhood $V$ is the union  of 
     \begin{itemize}
     \item[(i)] the total space  of  a bundle  over $\phi(\p N)$ whose fibers are  $(m-k+1)$-dimensional half-discs of radius $\eps$    and 
     \item[(ii)] the  total space of the normal $\eps$-disc bundle of rank $m-k$ over $\phi(N)$.  
     \end{itemize}
      We denote the total spaces of the associated  half-sphere and  sphere bundles by $\Sigma_0$ and $\Sigma_1$, respectively.
    We have $\p V=\Sigma_0\cup\Sigma_1$.  
     Consider the  sutured   cobordism $W_1$   obtained from $W$ by inverting the domain $V$ in $\p_+W$, and let  $W_0$ be  the sutured version  of the trivial cobordism $V\times[-1,1]$.
      Note that $\p V$ is  one of the suture components of $ W_1$, and at the same time it is   the sole suture  of $W_0$. Performing the suture gluing operation of  $W_1$  and $W_0$  along the domain $\Sigma_1\subset\p V$ we  obtain  a  sutured cobordism  isomorphic  to $\wt W$, see Figure \ref{fig:drilling1}.
Because $W_0$ is a trivial cobordism,   $\Morse(W_0,\p_- W_0)=0$, while   according to Lemma \ref{lm:Morse estimates}(ii) we have
  $$\Morse(W_1,\p_- W_1)\leq \max(\Morse(W,\p_-W),\Morse(V)+1).$$ Hence, 
  using Lemma  \ref{lm:Morse estimates}(i) we conclude
 \begin{align*}
 &\Morse(\wt W,\p\wt W)\leq \max(\Morse(W_1,\p W_1),\Morse(W_0,\p W_0))\\
 &\leq  \max(\Morse(W,\p_-W),\Morse(V )+1)=  \max(\Morse(W,\p_-W),\Morse(N )+1),
  \end{align*}
  as desired.
  \end{proof}
  \begin{lemma}\label{lm:subtr-handle}
Given positive  integers $m,k,\ell>0$ such that $m>k+\ell$  consider an index $m-k$ handle $H=D^{m-k} \times D^{k}$,  viewed as a sutured cobordism
 with $\p_-H=\p D^{m-k}\times D^{k}$ and $\p_+H= D^{m-k}\times \p D^{k}$. Let $D^\ell\subset D^{m-k}$ be the  equatorial disk cut out by the first $\ell$  coordinates.
 Denote  $S:=\p D^{\ell}\left(\frac12\right)\times D^{k}$, and  let $V$ be a tubular  neighborhood of $S$ in $H$.  Here the notation $D^j(r)$ stands  for a $j$-dimensional disc of radius $r$ and we write $D^j$ for $D^j(1)$. 
 Consider the sutured cobordism $$(W:=\overline{H\setminus V }, \p_+W:=\overline{\p_+H\setminus V \cup  \p V\setminus \p_+H},\;\p_-W:=\p_-H).$$ 
 Then $$\Morse(W,\p_-W)=m-k-\ell.$$
  \end{lemma}
  \begin{figure}[h]
\begin{center}
 \includegraphics[scale=.6]{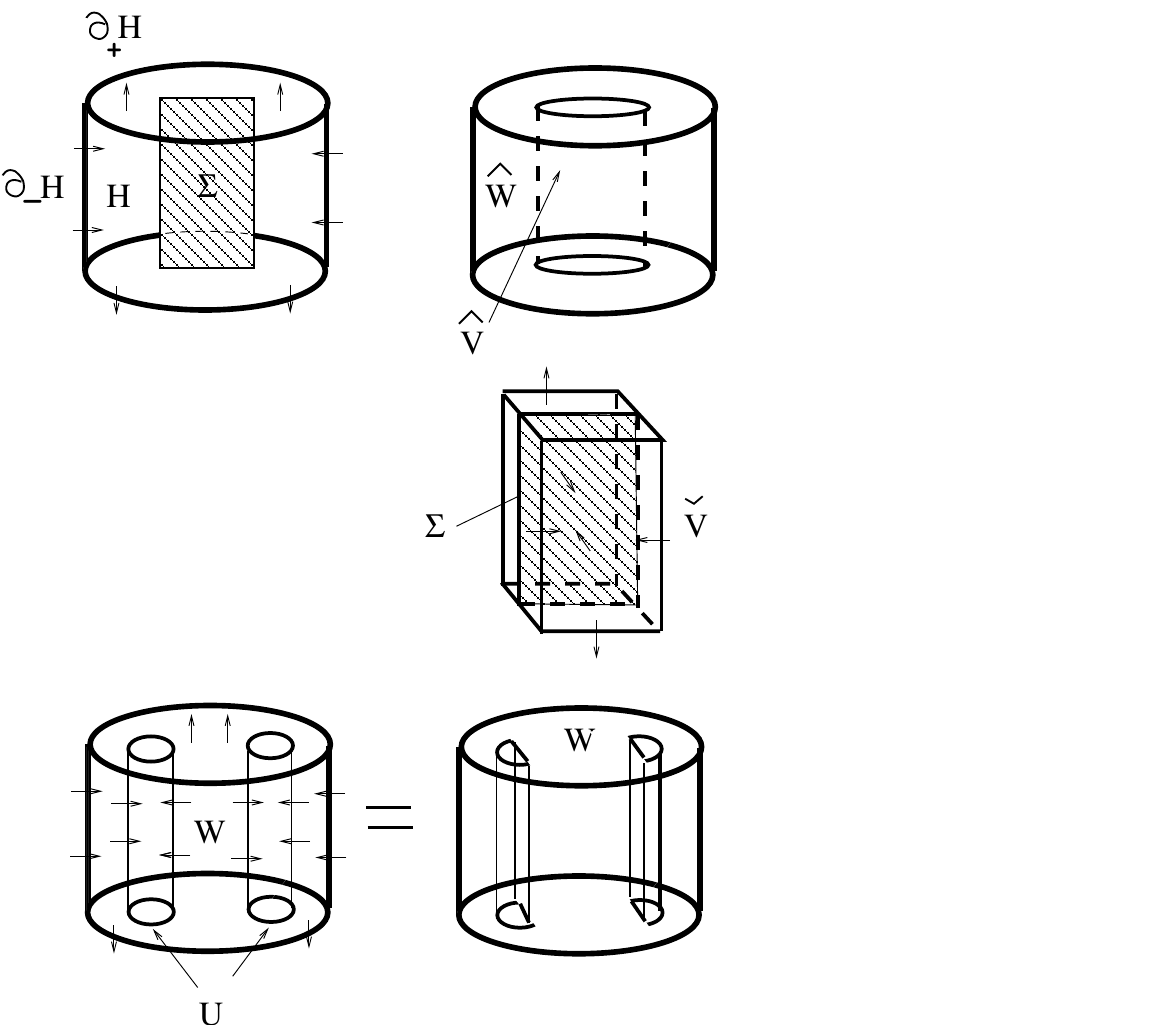} 
\caption{Two constructions of $W$; $m=3, k=\ell=1$.}
\label{fig:drilling2}
\end{center}  
\end{figure}
  \begin{proof}
  Let us give a different equivalent definition of the cobordism $W$.  Denote 
  $$\Sigma:=D^{\ell}\left(\frac12\right)\times D^{k},\;  \breve{ V}:=\Sigma\times D^{m-k-\ell}(\eps).$$  
  After smoothing non-essential corners we can view $\breve{V}$ as a sutured cobordism with $$\p_-\breve{V}:=D^{\ell}\left(\frac12\right)\times D^{k}\times \p D^{m-k-\ell}(\eps)\;\;\hbox{and} \;\;\p_+\breve{V}:=\p\left(D^{\ell}\left(\frac12\right)\times D^{k}\right)\times   D^{m-k-\ell}(\eps).$$
  
   Let $\wh V$ be the $\eps$-neighborhood  of the submanifold $\Sigma\subset H$.
  Consider a cobordism $\wh W$ obtained from $H$ by removing  $\wh V$:
  $$\wh W=\overline{H\setminus \wh V}, \p_+\wh W=\overline{\p_+H\setminus\wh V}\cup\overline{\p \wh V\setminus\p_+H},
  \p_-\wh W=\p_-H).$$  Note that $\p_-\breve{V}$ is contained in $\p_+\wh W$ and  by concatenating $\wh W$ and $\breve{ V}$   and  then smoothing non-essential corners we get a sutured cobordism diffeomorphic to $W$, see Fig.~\ref{fig:drilling2}.
 But $\wh V$ is equivalent to   a sutured version of the trivial cobordism over $\p_-H$, while $\breve{V}$ is just an index $m-k-\ell$ handle. Hence, $\Morse(W,\p_-W)= m-k-\ell$. 
    \end{proof}
   %%%%%%%%%%%%%%%%%
\subsection{Liouville and Weinstein sutured cobordisms}\label{sec:LW}
 A sutured cobordism $(W,Z)$ is called {\em Liouville} if it is  endowed with an exact symplectic form $\om=d\lambda$, and the vector field $Z$ is the Liouville vector field dual to the Liouville form $\lambda$.

 Any (sutured) Liouville cobordism can always be completed by attaching to 
 $\p_+W$ the positive part of the symplectization $(\p_+W\times[0,\infty),e^s(\lambda|_{\p_+W}))$, and to 
  $\p_-W$ the negative part of the symplectization $(\p_+W\times(-\infty,0],e^s(\lambda|_{\p_-W}))$, and matching the Liouville field $Z|_{\p_\pm W}$ with the vertical vector field $\frac{\p}{\p s}$, see Fig. \ref{fig:attach-ends}. The resulting
exact symplectic manifold (potentially with boundary) is denoted $\wt W$, and is called the {\em completion} of $W$.
   \begin{figure}[h]
\begin{center}
 \includegraphics[scale=.8]{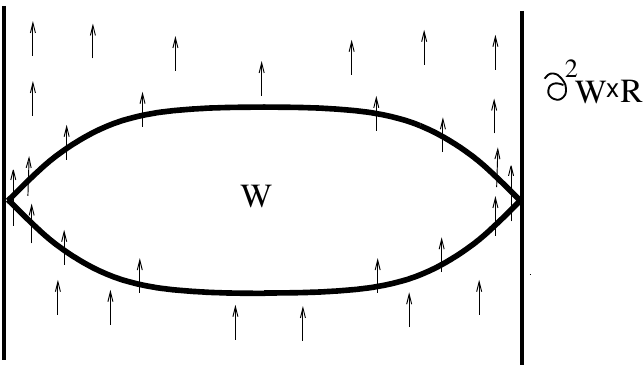} 
\caption{Attaching cylindrical ends.}
\label{fig:attach-ends}
\end{center}  
\end{figure}

By construction, the Liouville  vector field  $Z$ extends from $W$ to a complete Liouville field, still denoted by $Z$ on the completion    $\wh W\supset W$. In the case  when $\p^2W\neq\varnothing$ the manifold $\wh W$ has a boundary $\p^2W\times\R$ where each fiber $x\times\R$ is the complete $Z$-trajectory through $x\in\p^2W$. The completion construction yields the following useful statement.
  \begin{lemma}\label{lm:changing-form}
  Let $(W,\lambda)$ be a  sutured Liouville cobordism.
  Denote $\alpha_\pm:=\lambda|_{\p_\pm W}$. Then for any positive function $ {h}_+:\p_+W\to\R$ there exists   a Liouville cobordism structure $\wt\lambda$ on $W$ and a  function $ {h}_-:\p_-W\to\R$ which can be chosen  to agree with  $ {h}_+$  on $\Op\p^2W$,  such that $\wt\lambda|_{\p_\pm W}= {h}_\pm\alpha_\pm$.
  Similarly, for any positive function $ {h}_-:\p_-W\to\R$ there exists a    Liouville cobordism structure $\wt\lambda$ on $W$  and a function $ {h}_+:\p_+W\to\R$ which can be chosen equal to $ {h}_-$  on $\Op\p^2W$ such that $\wt\lambda|_{\p_\pm W}= {h}_\pm\alpha_\pm$. If the function   $ {h}_+$ (resp. $ {h}_-$)  is $\geq 1$ (resp. $\leq 1$) and $=1$ on $\p^2W$ then the function $ {h}_-$ (resp. ${h}_+$) can be chosen $\equiv 1$.
  \end{lemma}
  \begin{proof}
    Consider the completion $\wh W\supset W$.
For  any smooth   function $g:\p_\pm W\to\R$ denote by 
$\phi_\pm^g$ the embedding $\p_\pm W\to \wh W$ given by the formula
  $$\phi_\pm^g(x)=Z^{g(x)}(x),\;\; x\in\p_\pm W,$$
  where   $Z^t:\wh W\to \wh W$  is the time $t$ flow of the vector field $Z$.
  Note that $\left(\phi_\pm^g\right)^*\lambda=e^g\alpha_\pm.$
    Denote $\Sigma_\pm^g:=\phi_\pm^g(\p_\pm W).$
   
  Let  $g_+:=\ln h_+$, and  choose    a function $ g_-$ agreeing with $g_+$ on $\p^2W$, and  so that   the two  hypersurfaces   $\Sigma_+^{g_+}$ and $\Sigma_-^{ g_-}$ bound  in $\wh W$ a sutured cobordism $W^{g_+,g_-}$ (the second condition imposes a restriction on how large $g_-$ can be).  If these two conditions are satisfied, the embeddings $\phi_+^{g_+}$ and $\phi_-^{ g_-}$ extend to a diffeomorphism $
  G:W\to W^{g_+,g_-}$. Then the Liouville form $G^*\lambda$ restricts 
  to $\p_+ W$ as $ e^{g_+}\alpha_+$ and to $\p^-W$ as $e^{g_-}\alpha_-$, i.e. $h_-:=e^{g_-}$ is the required function.
  Similarly, setting $ g_-:=\ln h_-$ we can find  a function $ g_+$ equal to $  g_-$ on $\p^2W$ such that   the above claims hold,  i.e.  $\Sigma_+^{g_+}$ and $\Sigma_-^{ g_-}$ bound  in $\wh W$ a sutured cobordism $W^{g_+,g_-}$; then $h_+:=e^{g_+}$ is the required function. If $g_+\geq 0$ and  $g_+|_{\p^2W}=0$ then one can choose $ g_-\equiv 0$, and similarly if $  g_-\leq 0$ and $ g_-|_{\p^2W}=0$ then one can choose $g_+\equiv 0$.  

 \end{proof}  
 
 A (sutured) Liouville cobordism $(W,\om=d\lambda)$ is called {\em Weinstein} if the corresponding Liouville vector field $Z$ is gradient-like for a Morse function $\phi:W\to\R$.
 In the Weinstein case the critical points of the function $\phi$ have index $\leq n=\frac12\dim W$, see \cite{EliGro91}, and hence $\Morse(W,\p_-W)\leq n$.

Let us now consider the Liouville and Weinstein versions of the operations on sutured cobordisms introduced above  in Section \ref{sec:sutured}.

To define the  {\em concatenation} of two Liouville  cobordisms $(W,\lambda), (W',\lambda')$ choose a component $P$   of $\p_-W'$ and   a contact  form preserving embedding $$h:(P,\lambda'|_P),\to (\Int\p_+W,\lambda|_{\p_+W}).$$
This allows us to  match  the corresponding Liouville forms (as two Liouville forms on $\Op\p_-W'$ which coincide on $\p_-W'$ are isotopic) and hence define a Liouville cobordism structure on the result of the concatenation operation. Corner smoothing at non-essential corners, as defined above, uses the Liouville vector field $Z$, and hence will keep the boundary faces contact.
Similarly,  one defines the suture gluing operation of two sutured Liouville cobordisms if the matching diffeomorphism $g:U\to U'$ is chosen to preserve the induced forms:
$g^*\left(\lambda|_U'\right)=\lambda|_U.$

  The boundary inversion operation is more delicate in the Liouville context. It can be performed only for a restricted class of domains $U\subset\p_+W$, e.g. neighborhoods of Legendrian submanifolds, or, more generally, neighborhoods of Liouville hypersurfaces, see \cite{El-revis} .
  
 The   concatenation, gluing and inversion operations can also be  performed in  the category of sutured {\em Weinstein} cobordisms.
 \begin{remark}
When considering sutured Liouville and Weinstein cobordisms, or equivalently Liouville and Weinstein cobordisms between contact manifolds with boundary, it is customary to impose some constrains on the boundaries of the contact manifolds, e.g. contact convexity, in order to enable the application of methods from  the theory of holomorphic curves.   However,   the main results of this paper hold without any such constraints.  
 \end{remark} 
  \subsection{Attaching and subtracting Weinstein handles}\label{sec:surgery}

Let us recall some basic definitions and statements from the Weinstein surgery theory. For more detail, see 
\cite{CieEli12, Eli90, Wei91}.

Let $(\R^{2n},\sum\limits_{j=1}^n dp_j\wedge dq_j)$ be the standard symplectic space.
A {\em Weinstein handle} $H_k$ of index $k\leq n$ is
the domain
$$H_k=\left\{\sum\limits_{i=1}^k q_i^2\leq 1, \sum\limits_{i=k+1}^n q_i^2+\sum\limits_{i=1}^n p_i^2\leq 1 \right\}\subset\R^{2n}$$ endowed with the Liouville form
$$\lambda_k= \sum\limits_{i=1}^k (2p_idq_i+q_idp_i)+\sum\limits_{i=k+1}^n p_idq_i.$$
We have $d\lambda_k=\sum\limits_1^n dp_i\wedge dq_i$.

 The isotropic $k$-disc $$D:=\{q_{k+1}=\dots=q_n=0,\; p_1=\dots=p_n=0\}\subset H_k$$ is called  the {\em core }disc of the handle and the coisotropic $(2n-k)$-disc $$C:=
 \{q_{1}=\dots=q_k=0\}\subset H_k$$ the {\em  co-core} of the handle.
 We have a canonical decomposition $H_k=D\times C$.
 Denote $\Lambda_-:=\p D$, $\Lambda_+:=\p C$. 
 The handle $H_k$ can be viewed as a Weinstein sutured cobordism with
 \begin{align*}&\p_-H_k:= \Lambda_-\times C=\left\{\sum\limits_{i=1}^k q_i^2=1\right\}\cap H_k,\\
& \p_+H_k:=\Lambda_+\times D= \left\{\sum\limits_{i=k+1}^n q_i^2+\sum\limits_{i=1}^n p_i^2= 1\right\}\cap H_k.\end{align*}
   Indeed,   the Liouville vector field $$Z_k=\sum\limits_{i=1}^k \left(2 p_i\frac{\p}{\p p_i}-q_i\frac{\p}{\p q_i}\right)+\sum\limits_{i=k+1}^n p_i\frac{\p}{\p p_i}$$ is inwardly transverse to $\p_-H_k$ and outwardly transverse to $\p_+H_k$. 
   The function $ \sum\limits_{k+1}^n q_j^2+\sum\limits_1^n p_i^2-\sum\limits_1^k q_j^2$ serves as a Lyapunov function for the Liouville vector field $Z_k$.

 We will denote by $\xi_{k,\pm}$ the contact structure on $\p_\pm H^k $   defined by  the contact form $\lambda_{k,\pm}:=\lambda_k|_{\p_\pm H_k}$.

Let  $(W,\lambda)$  be a (sutured) Liouville cobordism, and suppose
 $h_-:(\p_-H_k,\lambda_{k,-})\to (\p_+W,\lambda|_{\p_+W})$ is a contact form preserving     embedding. Then we can attach the handle $H_k$  to $W$ using $h_-$ by concatenating   $W$ and $H_k$, as described in the previous section. We say that the resulting  sutured Liouville cobordism $(\wt W,\wt\lambda)\supset (W,\lambda)$ is obtained from $W$ by attaching an index $k$ Weinstein handle.
  Similarly,   given a contact form preserving embedding  $h_+:(H^k_+, \lambda_{k,+})\to (\p_-W,\lambda|_{\p_-W})$, we may consider   the backward concatenation  of    $H_k$  and   $W$.
     
   The effect of a Weinstein    handle attachment to the  positive  contact  boundary  $\p_+W$ is called the {\em direct}  Weinstein surgery of index $k$, while the effect of a  backward  attachment  to the negative   contact boundary $\p_-W$ is called the {\em inverse }   Weinstein surgery of index $2n-k$.\footnote{The direct and inverse Weinstein surgeries are also called $(-1)$- or $(+1)$-surgeries, respectively.}

   On the other hand,  if we are given a Liouville embedding  $G:(H_k,\p_-H_k;\lambda_k)\to (W, \p_-W;\lambda)$  (resp. $G:(H_k,\p_+H_k;\lambda_k)\to (W, \p_+W;\lambda)$) of the {\em whole} handle,  then one can {\em subtract} the handle, thus obtaining  a new Liouville cobordism $$(\wt W:=\overline{W\setminus G(H_k)},\wt\lambda=\lambda|_{\wt W})\subset (W,\lambda)$$ such that \begin{align*}
   &\p_+\wt W:=\p_+ W \;\hbox{ and}\; 
  \p_-\wt W:= \overline{\p_-W\setminus G(\p_-H_k)}\cup G(\p_+H_k),\\
  & 
   (\hbox{resp.}\;  \p_-\wt W:=\p_- W\;\hbox{ and}\;
   \p_+\wt W:= \overline{\p_+W\setminus G(\p_+H_k)}\cup G(\p_-H_k).)
   \end{align*}
   In fact, the result of this operation is   determined  (up to isomorphism) by the isotropic embedding $G|_D$, where $D\subset H_k$ is the core disc (resp. the co-isotropic embedding $G|_C$ where $C\subset H_k$ is the co-core disc).   
   
   %Similarly,  if we are given a  Liouville embedding   of the whole handle   $G:(W_k,H^k_+;\lambda_k)\to (W, \p_+W;\lambda)$ then one can {\em subtract} the handle, thus obtaining  a new Liouville cobordism $(\wt W,\wt\lambda)\subset (W,\lambda)$ such that $(\p_-W=\p_-\wt W)$ and $(\wt X:= W\setminus \Int \wt W,\wt\lambda|_X)$ is an elementary Weinstein cobordism  between $\p_+\wt W$ and $\p_+  W$  with a single handle of index $k$. This operation is   determined by the coisotropic embedding $G|_C$, where $C\subset H_k$ is the co-core disc.  

  The following lemma is a corollary of the classification of overtwisted contact structures established in
  \cite{BEM}, see Theorem \ref{thm:ot-properties} in the current paper.
  \begin{lemma}\label{lm:ot-attaching}
  Let $(Y,\xi)$ be an overtwisted contact manifold.
  Then any almost contact  embedding $h_\pm: (\p_\pm H_k,\xi_{k,\pm})\to (Y,\xi)$ is isotopic to a genuine contact embedding. We can furthermore ensure that the complement of the image $h_\pm(\p_\pm H_k)$ is overtwisted.
  \end{lemma}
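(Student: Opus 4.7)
The plan is to promote $h_\pm$ to a genuine contact embedding via the existence-and-uniqueness $h$-principle for overtwisted contact structures, Theorem \ref{thm:ot-properties}.1. A formal contact embedding $h_\pm$ consists of a smooth embedding $h : H^k_\pm \hookrightarrow Y$ together with a path of almost contact structures on a tubular neighborhood $U \supset h(H^k_\pm)$ interpolating between the pushforward $h_*\xi_{k,\pm}$ and $\xi|_U$. First I would use the standard neighborhood theorem for isotropic (resp.\ coisotropic) submanifolds to ensure that $h_*\xi_{k,\pm}$ extends to a genuine contact structure on a neighborhood $U' \subset U$ of $h(H^k_\pm)$.

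Next, using the given formal-embedding homotopy, I would build a global almost contact structure $\eta$ on $Y$ that coincides with $h_*\xi_{k,\pm}$ on $\Op h(H^k_\pm)$ (and hence is genuinely contact there), coincides with $\xi$ outside $U$, and interpolates continuously on the collar $U \setminus U'$ using the given homotopy. By construction $\eta$ is homotopic to $\xi$ as an almost contact structure on $Y$ and is genuinely contact on a neighborhood of $A := h(H^k_\pm)$.

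Applying the existence statement of Theorem \ref{thm:ot-properties}.1 to $\eta$ rel.\ $A$ produces an overtwisted contact structure $\xi'$ on $Y$ that agrees with $\eta$ on $\Op A$ and is homotopic to $\eta$, and hence to $\xi$, as almost contact structures. By pre-inserting a small overtwisted disc inside each connected component of $Y \setminus A$ before invoking the $h$-principle, we can furthermore arrange that $\xi'$ is overtwisted on every connected component of $Y \setminus A$.

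Finally, both $\xi$ and $\xi'$ are overtwisted contact structures on $Y$ homotopic as almost contact structures, so the uniqueness clause of Theorem \ref{thm:ot-properties}.1 supplies an ambient isotopy $\phi_t : Y \to Y$ with $\phi_0 = \Id$ and $\phi_1^*\xi = \xi'$. The composition $\phi_1 \circ h : (H^k_\pm, \xi_{k,\pm}) \hookrightarrow (Y, \xi)$ is then a genuine contact embedding, isotopic to the original $h_\pm$ through formal contact embeddings, and its complement is overtwisted on each connected component. The main technical step I expect to require care is the construction of $\eta$: correctly packaging the germ-level formal data into a global almost contact structure on $Y$ that interpolates cleanly between $h_*\xi_{k,\pm}$ near $h(H^k_\pm)$ and $\xi$ elsewhere. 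Once that is in place, the two halves of the BEM classification do all the heavy lifting.
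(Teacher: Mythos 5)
The paper gives no argument of its own for this lemma; it states that it ``follows from the classification of overtwisted contact structures in \cite{BEM}'' and cites Theorem \ref{thm:ot-properties}.1. What you have written out is precisely that derivation, and it is correct: extend $h_*\xi_{k,\pm}$ to a global almost contact structure $\eta$ on $Y$ which is genuinely contact near $h(H^k_\pm)$, agrees with $\xi$ away from a tubular neighborhood, and is homotopic to $\xi$; apply the existence clause rel.\ $A = h(H^k_\pm)$ (after inserting an overtwisted local model in each component of $Y \setminus A$) to obtain an overtwisted $\xi'$ extending $h_*\xi_{k,\pm}$ with overtwisted complement; apply the uniqueness clause with $A = \varnothing$ to produce, via Gray stability, an ambient isotopy $\phi_t$ with $\phi_1^*\xi = \xi'$; and observe $\phi_1 \circ h$ is then a genuine contact embedding whose complement is overtwisted for $\xi$.

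Two minor remarks. First, since $h : H^k_\pm \hookrightarrow Y$ is a codimension-$0$ embedding, the pushforward $h_*\xi_{k,\pm}$ is already a genuine contact structure on a neighborhood of $h(H^k_\pm)$; the isotropic/coisotropic neighborhood theorem is not needed to obtain genuineness, although it is a convenient way to organize the extension past the boundary $\p H^k_\pm$. Second, the uniqueness clause yields an isotopy of contact structures, so Gray stability is what upgrades it to the ambient diffeotopy $\phi_t$ you use; on a closed $Y$ this is automatic, but it deserves explicit mention since you use $\phi_1$ as a contactomorphism $(Y,\xi') \to (Y,\xi)$ to transport overtwistedness of the complement. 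Neither point is a gap --- the argument is correct and is the one the authors intend the reader to extract from the BEM citation.
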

%Added
 We recall that  given two contact manifolds $(M,\xi)$ and $(N,\eta)$, an {\em almost contact embedding}
is a pair $(f,\Phi_t)$ where $f:M\to N$ is a smooth embedding and $\Phi_t:TM\to TN$ is a  homotopy of  injective homomorphisms covering $f$ such that $\Phi_0=df$, $\Phi_1(\xi)\subset\eta$ and $\Phi_1|_\xi:\xi\to\eta$ is a fiberwise conformal symplectic homomorphism with respect to the    symplectic structures  induced  on $\xi$ and $\eta$ by any choice of contact forms.   A genuine contact embedding $f:M\to N$ can be viewed as   almost contact by adding a homotopy $\Phi_t:=df$.
  
   \begin{proof}[Proof of Lemma \ref{lm:ot-attaching}] We will  prove below a   more   
   general statement: {\em any almost contact embedding $(f,\Phi_t):(M,\xi)\to (N,\eta)$  between two contact manifolds, where  $\eta$ is overtwisted, is isotopic through the space of almost contact embeddings to a genuine contact embedding with an overtwisted complement.}   Note that the  normal bundle  $\nu$  to $\Phi_1(TM)$ in $TN$ has a   symplectic  structure $\mod\, C^{\infty}(M,\R)$, because $\nu$ is isomorphic to  the quotient  $\nu=\eta/\Phi_1(\xi)$ of two   symplectic bundles $\mod\, C^{\infty}(M,\R)$.
  The homotopy $\Phi_t$ allows us to pull back this structure to the normal bundle $\wt\nu$ to $f(M)$ in $N$.
A neighborhood $U$ of  the $0$-section $M$ in the total space $E$ of the bundle $f^*\wt\nu$ over $M$ has a canonical, up to  fixed on $M$ isotopy, contact structure $\wt\xi$ by Darboux (-Givental)'s theorem.
Having chosen such a neighborhood $U$, let us extend $f$ to a diffeomorphism $F$ of  $U$  onto a  tubular neighborhood of $f(M)$ in $N$.  We can extend the homotopy $\Phi_t$ to a homotopy $\wt\Phi_t$ of fiberwise isomorphisms
$TU\to TN$, covering $F$ and such that $\wt\Phi_0=dF$,  $\wt\Phi_1(\wt\xi)=\eta$ and  $\wt \Phi_1|_{\wt \xi}:\wt \xi\to\eta$ is a conformally symplectic isomorphism.   The push-forward $ (\wt\Phi_t)_*\xi$ is a homotopy of almost contact structures on $F(U)$ connecting $F_*\wt\xi$ with $\eta$. This homotopy  can be extended to a homotopy  $\eta_t$ of almost contact structures on all of  $N$ connecting $\eta_1=\eta$ with an almost contact structure $\eta_0$ which coincides with the genuine contact structure $  F_*\wt\xi$ on $F(U)$. Hence, by applying the classification of overtwisted contact structures, we get a genuine overtwisted contact structure $\wt\eta$ on $N$ extending $  F_*\wt\xi$, which is in the same  homotopy class of almost contact structures as $\eta$. Therefore, according to Theorem \ref{thm:ot-properties} $\wt\eta$  is also in  the same     homotopy class of genuine contact structures  and moreover, we can ensure that the contact structure $\wt\eta$ is overtwisted on every component of the complement  $N\setminus F(U)$. By  Gray's theorem \cite{Gray} there is a  diffeotopy $\phi_t:N\to N$ such that $\phi_0=\Id$ and $(\phi_1)_ *F_*\wt\xi=\eta$. Then $\phi_1\circ f$ is the required contact embedding $(M,\xi)\to (N,\eta)$.
   \end{proof}
 %End insertion %%%%%%%%%%%%%%%%%%%%%%%%
 %%%%%%%%%%%%%%%%%%%%%%%%
   
 %Given a manifold $W$ with boundary and a submanifold $(\Sigma,\p\Sigma)\subset (W,\p W)$ of positive codimension we say that  the relative Morse type of $(W,\Sigma)$ is $\leq k$ if  the  relative Morse type of  $(\wt W:=W\setminus \Int N, \p_-\wt W:=\p N\setminus \Int (N\cap\p W)$  is  $\leq k$, where  we denoted by $N$   a tubular neighborhood of $\Sigma$, .
 
  The following theorem is a corollary of  Lemma \ref{lm:ot-attaching} and  the results from  \cite{Eli90} (see also \cite{CieEli12}) which were proved using Weinstein handlebody constructions:
    \begin{thm}\label{thm:Weinstein-case}
Consider a  $2n$-dimensional connected almost symplectic sutured cobordism $(W,\eta)$.  Let $\lambda$ be a Liouville form on $\Op\p_-W$ such that the corresponding Liouville field $Z$ is inward transverse to $\p_-W$ and outward transverse to $\p_+W\cap\Op\p_-W$.   Suppose that  the contact structure $\xi_-=\{\lambda|_{\p_-W}=0\}$ is  compatible  with $\eta$. If $n=2$ we assume that $\xi_-$ is overtwisted on at least one of the components of $\p_-W$. Suppose  that $\Morse(W,\p_- W) \leq n$. Then $\lambda$ extends to  $W$  as  a sutured Liouville  cobordism  structure  with $(\p_+W,\Ker(\lambda|_{\p_+W}))$ and $(\p_-W,\Ker(\lambda|_{\p_+W}))$ as  its positive and negative   contact boundaries, and     such that  $d\lambda$ and  $\eta$ are homotopic  rel. $\p_-W$ as almost symplectic forms.    If   $\xi_-$ is overtwisted on at least one of the components of $\p_-W$, then one can arrange that $\xi_+$ is overtwisted as well (and in this case $\xi_+$ is uniquely determined up to isotopy by the homotopy class of the almost symplectic structure $\eta$).
  \end{thm}

  \begin{remark}\label{rem:Liouville-Weinstein}
  1. Connectedness of $W$ and the assumption $\Morse(W,\p_- W)\leq n$ imply that $\p_+W$ is connected.
  
  2. In fact, the construction in \cite{Eli90} and  \cite{CieEli12} yields a Weinstein  (and  not just Liouville) cobordism structure on $W$, but this result will   not be needed for our purposes.  Though the corresponding results are formulated in \cite{Eli90} and  \cite{CieEli12}  for non-sutured cobordisms,  the proof is local near the attaching spheres of the handles, and hence it works without any changes for the case when $\p^2W\neq\varnothing$.
  \end{remark}
   Theorem \ref{thm:Weinstein-case} together with Lemma \ref{lm:changing-form} implies the following   
\begin{cor}\label{cor:Weinstein-form}
Consider a  $2n$-dimensional  connected almost symplectic sutured cobordism $(W,\eta)$. Suppose  that   $\Morse(W,\p_- W) \leq n$.  Let $\lambda$ be a Liouville form on $\Op\p W$ such that the corresponding Liouville field $Z$ is inwardly transverse to $\p_-W$ and outwardly transverse to $\p_+W$. Suppose that  $\eta|_{\Op\p W}=d\lambda$ and that
  both contact forms $\alpha_\pm:=\lambda|_{\p_\pm W}$ are overtwisted (if $\p_-W$ is disconnected then we suppose that  at least one of its components is overtwisted). Then there exists a Liouville form $\Lambda$ on $W$ such that 
\begin{enumerate}
\item $d\Lambda$ is homotopic to $\eta$ via a homotopy of almost symplectic forms fixed on $\p_+W$;
\item $\Lambda|_{\p_+W}=\alpha_+$;
\item $\Lambda|_{\p_-W}= h\alpha_-$ for a function $h:\p_-W\to(0,1]$ which is equal to $1$ near $\p(\p_-W)=\p^2W.$

\end{enumerate}
\end{cor}
 
   The next proposition concerning an inverse Weinstein surgery on loose Legendrian knots is proven in \cite{CaMuPr} (see  \cite{Mur11} for a definition of loose Legendrians). For the convenience of the reader we provide a modified proof here.

\begin{prop}\label{prop:anti-surgery}
The inverse surgery on a loose Legendrian knot  produces an overtwisted contact manifold. 
\end{prop}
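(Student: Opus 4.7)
The plan is to exhibit an overtwisted disc in $\tilde Y$ explicitly, using the loose chart of $\Lambda$ as a local model and adapting the classical three-dimensional argument that contact $(+1)$-surgery on a stabilized Legendrian produces an overtwisted contact structure.

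First I would normalize $\Lambda$ in its loose chart. By definition of looseness and the h-principle for loose Legendrians of \cite{Mur11}, a contact isotopy of $\Lambda$ brings a specified arc of $\Lambda$ into a Darboux ball $U\subset Y$ so that $\Lambda\cap U$ assumes a fixed standard stabilization model --- a zig-zag in a three-dimensional factor crossed with a transverse Darboux ball of dimension $2n-4$. Because inverse Weinstein surgery only modifies a contact neighborhood of $\Lambda$, it suffices to find an overtwisted disc inside the region $\tilde V\subset\tilde Y$ that replaces a contact neighborhood $V\supset U$ of $\Lambda$; any overtwisted region I find there will persist in all of $\tilde Y$.

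Next I would compute the surgery in coordinates. In the standard Weinstein model of Section \ref{sec:surgery} with $k=2$, the inverse handle replaces the tubular neighborhood $S^1\times D^{2n-2}$ of $\Lambda$ in $Y$ by a piece of $D^2\times S^{2n-3}$ equipped with the restriction of the Liouville form $\lambda_2$. Writing the surgery out in coordinates compatible with the loose chart, the stabilizing zig-zag of $\Lambda$ is absorbed into the new handle region, and the resulting local contact model splits essentially as a product: a three-dimensional piece corresponding to $(+1)$-surgery on a stabilized Legendrian knot, times a Darboux factor of dimension $2n-4$.

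The main step, and the one I expect to be the principal obstacle, is to identify this local post-surgery model with the BEM overtwisted disc of \cite{BEM}. In dimension three, the classical fact that the stabilizing bigon is transformed after the $(+1)$-surgery into a genuine overtwisted disc handles the three-dimensional factor. In higher dimensions, the product structure above reduces the verification to the three-dimensional case, provided the transverse Darboux factor can be identified with the stabilization directions of the higher-dimensional BEM overtwisted disc --- an explicit coordinate check that is elementary but somewhat laborious, and where one must be careful that the contact form produced by the surgery matches the normal form of the overtwisted disc used in \cite{BEM} rather than merely its underlying almost contact data.
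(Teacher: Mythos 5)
Your opening moves are exactly the paper's: use the loose chart to normalize $\Lambda$ so that inside the chart it looks like a stabilized Legendrian $\Gamma_\st$ in a $3$-dimensional contact factor times a Lagrangian disc in a transverse Darboux factor; observe that the inverse surgery is local; compute that the surgered local model is (a product of) a $3$-dimensional inverse-surgered piece, which is overtwisted by the classical $\tb=0$ argument, times the transverse Liouville factor. Up to this point the two arguments coincide.

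The gap is in your final step, which you yourself flag as the main obstacle but underestimate. You propose to conclude by matching the product $(\text{OT } 3\text{-manifold})\times(\text{Darboux factor})$ directly with the BEM overtwisted-disc normal form via an ``elementary but laborious coordinate check.'' This is not elementary, and it is not expected to work as stated: the higher-dimensional overtwisted disc of \cite{BEM} is not a $3$-dimensional overtwisted disc crossed with a Darboux ball, so there is no reason for the two local models to agree in coordinates. More importantly, the statement that an overtwisted contact manifold times a Liouville disc is overtwisted is \emph{false} without a size hypothesis -- it is precisely Theorem \ref{thm:ot-properties}.2 (due to Casals--Murphy--Presas, and reproved as Theorem 10.2 of \cite{BEM}), and it only holds when the disc factor is \emph{sufficiently large}. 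Your argument never invokes this theorem and never addresses the size requirement.

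The paper's proof closes the gap by using both of these inputs explicitly. First, it invokes Theorem \ref{thm:ot-properties}.2 instead of attempting any normal-form identification: once the surgered local model is recognized as $V_1^-\times U_2$ with $V_1^-$ a $3$-dimensional overtwisted piece and $U_2$ a Liouville domain, one cites the product theorem. Second, and crucially, it uses the fact -- built into the definition of looseness from \cite{Mur11} -- that the transverse factor $U_2$ in the loose chart can be taken \emph{arbitrarily large} relative to the zig-zag, which is exactly what the ``sufficiently large $R$'' hypothesis of Theorem \ref{thm:ot-properties}.2 requires. Without noting that looseness supplies an arbitrarily large transverse Darboux factor, the argument does not conclude: a loose chart with a fixed small transverse disc would give no control. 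So the missing ideas in your proposal are (a) replacing the direct normal-form match by an appeal to the CMP product theorem, and (b) the role of the size of the loose chart.
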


\begin{proof} 
Let $\Lambda\subset (Y,\xi)$ be a loose Legendrian sphere. Note that for an arbitrary neighborhood of the $0$-section  $U\subset T^*\Lambda$   the  inclusion $\Lambda \hookrightarrow Y$ extends to a contact  embedding  $(U\times[-1,1], dz-\lambda_\std)\to Y$.   Present the sphere $\Lambda$ as a union $A\cup B:=S^1\times D^{n-2}\cup D^2\times S^{n-3}$. Then  $T^*\Lambda=T^*A\cup T^*B$ and $T^*A=T^*S^1\times T^*D^{n-2}$. Hence, we can choose 
  the above neighborhood $U$ to contain   the product of arbitrary large neighborhoods 
 $U_1$ and $U_2$ of the $0$-sections in $T^*S^1$  and $T^*D^{n-2}$. We denote Liouville forms  in $T^*S^1$  and $T^*D^{n-2}$ by $u\,dt$ and $\sum\limits_1^{n-2}p_idq_i$, respectively.
 Let $\Gamma_\st$ be a Legendrian  stabilization of the $0$-section $\Gamma$ in the $3$-dimensional contact manifold $(V_1:=U_1\times [-1,1], dz-u\,dt)$.  By attaching an inverse $4$-dimensional handle $H_2$ along $\Gamma_\st$ we get a symplectic cobordism $X$ with $\p_+X = V_1$ and $\p_-X =: V_1^-$. The negative boundary $\p_-X$ is overtwisted, because a parallel copy of the zero section $\Gamma$ bounds a disk  in $V_1^-$ (after flowing from $\p_+X$ to $\p_-X$) and has $0$ Thurston-Bennequin number. According to \cite{Mur11}, there exists a Legendrian sphere $\Lambda_\st$ in a neighborhood  of $\Lambda$ which is Legendrian isotopic to  $\Lambda$ in $(Y,\xi)$ and so that $\Lambda_\st \cap V_1 \x U_2 = \Gamma_\st\times\{p=0\}$. Then the Liouville manifold  which we get by attaching the inverse handle $H_n$  along  $\Lambda_\st$  contains the product $X\times U_2$, and hence its negative contact boundary   contains $V_1^-\times U_2$. But the neighborhood of the $0$-section $U_2\subset T^*D^{n-2}$ can be chosen  arbitrarily large, and hence the resulting contact manifold is overtwisted by Theorem \ref{thm:ot-thick}.
 \end{proof}
 \begin{corollary}[see \cite{CaMuPr}]\label{cor:concordance}
 For any contact manifold $(Y,\xi)$ of dimension $>3$, there exists a Weinstein cobordism  structure
 on $W=Y\times [0,1] $ between the contact structure $\xi$ on $\p_+W:=Y\times 1$ and an overtwisted contact structure $\xi_{\rm ot}$ on $\p_-W:=Y\times 0$.
 \end{corollary}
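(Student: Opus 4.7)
The plan combines Proposition \ref{prop:anti-surgery} with a smooth handle cancellation. Since $\dim Y = 2n-1>3$ we have $n\geq 3$, which is enough to invoke Murphy's h-principle for loose Legendrians \cite{Mur11}.

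First, I would find a loose Legendrian $(n-1)$-sphere $\Lambda\subset(Y,\xi)$ inside a Darboux ball, by stabilizing a standard Legendrian unknot there. Starting from the trivial Liouville cobordism $(Y\times[0,1],d(s\alpha))$ between $(Y,\xi)$ and itself, I attach an inverse Weinstein handle of index $n$ along $\Lambda$ on the negative end $Y\times\{0\}$. By Proposition \ref{prop:anti-surgery}, this produces a Liouville cobordism $W_1$ with positive end $(Y,\xi)$ and an overtwisted contact manifold $(Y_1,\xi_1^{\rm ot})$ as its new negative end. Smoothly, $W_1$ is the trace of a single $n$-handle attached to $Y_1$ along an unknotted sphere, so the underlying smooth cobordism is not yet a product.

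Next, I would cancel the topology by attaching a second Weinstein handle to $(Y_1,\xi_1^{\rm ot})$ on the negative side. The formal attaching data of this second handle is chosen so that (i) the handle's index and its attaching sphere provide a Smale-cancellation partner for the $n$-handle of the previous step, with the algebraic intersection number between its attaching sphere and the belt sphere of the first handle in $Y_1$ equal to $\pm 1$; and (ii) its attaching region lies disjoint from an overtwisted disc of $(Y_1,\xi_1^{\rm ot})$. Since $(Y_1,\xi_1^{\rm ot})$ is overtwisted, Lemma \ref{lm:ot-attaching} promotes this formal contact-isotropic embedding to a genuine one, giving a bona fide Weinstein handle attachment. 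The resulting Liouville cobordism $W$ satisfies $\p_+W=(Y,\xi)$, has some new negative end $(Y_0,\xi_0)$, and by the smooth handle cancellation is diffeomorphic to $Y\times[0,1]$ relative to the boundary (for $n\geq 3$ the h-cobordism theorem closes the gap between algebraic and geometric cancellation). Finally, because the overtwisted disc was placed away from the second attaching region, the contact structure near it is unchanged by the attachment, so the disc descends to an overtwisted disc in $(Y_0,\xi_0)$, showing $\xi_0$ is overtwisted.

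The main obstacle is the second step: producing a cancelling Weinstein handle whose attaching sphere is genuinely contact-isotropic and at the same time transversely meets the belt sphere of the first handle in exactly one point within $Y_1$. In an arbitrary contact manifold such a combination of formal data is obstructed, but the overtwistedness of $(Y_1,\xi_1^{\rm ot})$ dissolves this obstruction via Lemma \ref{lm:ot-attaching}, since in an overtwisted manifold the formal and genuine isotropic-embedding problems coincide up to isotopy.
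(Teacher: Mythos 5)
Your construction follows the same broad strategy as the paper (two inverse Weinstein surgeries, Proposition~\ref{prop:anti-surgery} to make the intermediate level overtwisted, Lemma~\ref{lm:ot-attaching} to realize the second attaching region), but the way you recover the smooth product structure has a real gap.

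You attach the two handles from scratch and then try to argue a posteriori that the resulting smooth cobordism is $Y\times[0,1]$. To do this you ask only for \emph{algebraic} intersection number $\pm 1$ between the relevant spheres in $Y_1$, and then invoke the $h$-cobordism theorem to upgrade algebraic to geometric cancellation. That step fails in general: the $h$-cobordism theorem requires simple connectivity, and neither $Y$ nor your intermediate level $Y_1$ is assumed simply connected. The $s$-cobordism theorem would need a vanishing Whitehead torsion, which you have not established. So as written the argument does not show that $W\cong Y\times[0,1]$. The paper sidesteps this entirely by a different order of operations: it first \emph{deforms the Weinstein structure on the actual product} $Y\times[0,1]$ to create a cancelling pair of critical points of indices $n-1$ and $n$, so the underlying smooth cobordism is literally the product from the start. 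It then replaces the contact attaching data of both handles (first the Legendrian attaching sphere of the index-$n$ handle by a stabilization, then the coisotropic attaching sphere of the index-$(n-1)$ handle using Lemma~\ref{lm:ot-attaching}) by embeddings that are required to be \emph{smoothly isotopic to the originals}. Since isotopic handle attachments give diffeomorphic cobordisms rel boundary, the smooth type never leaves $Y\times[0,1]$, and no Whitney-trick or cobordism theorem is needed.

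Your approach can likely be repaired without leaving your framework: since $\Lambda$ lives in a Darboux ball, the belt sphere $\Lambda_1\subset Y_1$ of the first inverse handle is the belt sphere of a standard surgery in a ball, and one may choose a smooth embedding of $H^{n-1}_+$ so that the coisotropic sphere $\Lambda_+$ meets $\Lambda_1$ \emph{geometrically} transversely in a single point; the smooth isotopy produced by Lemma~\ref{lm:ot-attaching} then gives an isotopic attaching map, hence a diffeomorphic (rel $Y$) product cobordism. But that is a different argument than the one you wrote; the invocation of the $h$-cobordism theorem, as stated, is a genuine error. There is also a minor terminological slip (``belt sphere of the first handle in $Y_1$'' should be its attaching sphere $\Lambda_1\subset Y_1$; the belt sphere lives in $Y$), and you implicitly assume the existence of a formal coisotropic embedding with the prescribed smooth intersection pattern, which deserves a word of justification, though it is indeed available here because the whole picture is local to a ball.
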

 \begin{proof}
 On the trivial  Weinstein cobordism $W=Y\times[0,1]$  deform the Weinstein structure to create two critical points $a,b$  of index $n-1$ and $n$, respectively. Let $(Z, \xi_Z)$ be the intermediate level set for this cobordism.  Let $\Gamma\subset Y\times 1$ be the unstable  Legendrian  sphere for the critical point $b$. Consider a stabilization $\wh\Gamma$   of $\Gamma$ which is formally isotopic to $\Gamma$. By attaching an inverse handle  $H_n$ to $(Y=Y\times 1,\xi)$ along $\wh \Gamma$ we construct a Liouville cobordism between $Z$ and $Y$ with an overtwisted contact structure $\xi_{\rm ot}$ on $Z$ in the formal class of $\xi_Z$.
 By Lemma \ref{lm:ot-attaching} we can find a  coisotropic embedding of an $(n+1)$-dimensional sphere  into $(Z,\xi_{\rm ot})$ which is in the formal class of the unstable sphere of the critical point $a$ in $(Z, \xi_Z)$, and such that its complement is still overtwisted.  Hence one can attach an  inverse handle $H_{n-1}$ to $Z$ to get a Weinstein structure on the cobordism $Y\times [0,1]$ with the contact structure $\xi$ on the positive end  and an overtwisted contact structure  on the negative end, as desired.
 \end{proof}

 \subsection{Almost symplectic structures on codimension 2 submanifolds}

\begin{lemma}\label{lm:almost-complex}
Let $\Sigma$ be a codimension $2$ connected oriented  submanifold of a $2n$-dimensional  almost symplectic manifold $(W,\eta)$. Then $\eta$ is homotopic to $\wt \eta$ for  which $\Sigma$ is $\wt \eta$-symplectic   in the complement of a $(2n-2)$-dimensional ball $D\subset \Sigma$. If $W$ is a manifold with boundary and $(\Sigma,\p\Sigma)\subset(W,\p W)$ a submanifold with  non-empty boundary $\p\Sigma$, then $\eta$ is homotopic to $\wt \eta$ for  which $\Sigma$ is $\wt \eta$-symplectic everywhere. 
\end{lemma}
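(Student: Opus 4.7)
The argument is by obstruction theory on $\Sigma$. Consider the bundle $\mathcal{J}(TW|_\Sigma)\to\Sigma$ of orientation-compatible complex structures on $TW|_\Sigma$ (fiber $O(2n)/U(n)$), and its subbundle $\mathcal{J}^\Sigma(TW|_\Sigma)$ of those preserving $T\Sigma$ (fiber $O(2n-2)/U(n-1)$, embedded via $[A]\mapsto [A\oplus J_V]$, where $J_V$ is the oriented complex structure on the fixed $2$-plane $V$). The original $J$ defines a section of $\mathcal{J}(TW|_\Sigma)$, and the plan is to homotope it through sections to one lying in $\mathcal{J}^\Sigma$. Once this is accomplished over $\Sigma$, the homotopy propagates to a tubular neighborhood of $\Sigma$ in $W$ and is damped to the identity outside, producing $\wt J$ on all of $W$.

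The essential input is that the fiber pair $(\mathcal{J}(\R^{2n}),\,\mathcal{J}^V(\R^{2n}))=(O(2n)/U(n),\,O(2n-2)/U(n-1))$ is $(2n-3)$-connected. I would verify this by comparing the long exact sequences of the two $O/U$ fibrations $U(n-1)\to O(2n-2)\to \mathcal{J}^V$ and $U(n)\to O(2n)\to \mathcal{J}$, applying the standard stable-range isomorphisms $\pi_k(O(2n-2))\to \pi_k(O(2n))$ and $\pi_k(U(n-1))\to \pi_k(U(n))$ in the appropriate range. (For $n=2$ the pair reduces to $(S^2,\mathrm{pt})$ and for $n=3$ to $(\C\P^3,\C\P^1)$, both visibly $(2n-3)$-connected.) By obstruction theory, the only potential obstruction to the desired homotopy is then a class in $H^{2n-2}(\Sigma;\,\pi_{2n-2}(\mathcal{J},\mathcal{J}^V))$.

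For the first claim, I choose a Morse function on $\Sigma$ with a single local maximum $p_0$, whose unstable manifold is an open $(2n-2)$-cell, and take $D$ to be a small closed ball inside it around $p_0$. The complement $\Sigma\setminus\Int D$ deformation retracts onto the union of stable manifolds of critical points of index $\leq 2n-3$, a CW complex of dimension at most $2n-3$, on which the group $H^{2n-2}$ vanishes. The obstruction therefore restricts to zero on $\Sigma\setminus\Int D$, so the homotopy exists there; extending it to a tubular neighborhood of $\Sigma\setminus\Int D$ in $W$ and leaving $J$ unchanged over a neighborhood of $D$ gives the required $\wt J$.

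For the second claim, if $\partial\Sigma\neq\varnothing$ one picks a Morse function on $(\Sigma,\partial\Sigma)$ with no interior local maxima (gradient outward along $\partial\Sigma$); then $\Sigma$ itself retracts onto a CW complex of dimension at most $2n-3$, so by the same cohomological-dimension argument the obstruction vanishes on all of $\Sigma$ and $\Sigma$ is made entirely $\wt J$-holomorphic. The main technical step I expect to require care is the $(2n-3)$-connectedness of the fiber pair uniformly in $n$: one either pushes the stable-range comparison through by hand in the unstable edge cases, or identifies $\mathcal{J}/\mathcal{J}^V$ with a geometric object (a Grassmannian of $J$-complex $2$-planes) whose low-dimensional homotopy can be computed directly.
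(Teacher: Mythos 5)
Your approach is correct and is essentially the same as the paper's: both are obstruction-theoretic arguments resting on the $(2n-3)$-connectivity of the pair of spaces of compatible complex structures on $\R^{2n}$ with and without the constraint of preserving a fixed oriented $(2n-2)$-plane. The one place where you flag a potential difficulty --- establishing this connectivity ``uniformly in $n$'' --- is exactly where the paper takes a cleaner path, and it is worth noting it: instead of running the five-lemma through the ladder of $O/U$ fibrations (where tracking the stable range at the edge degrees $2n-3,\,2n-4$ is fiddly), one observes that evaluation $J\mapsto J e_{2n-1}$ on a fixed normal vector $e_{2n-1}$ gives a fiber bundle $\mathcal{J}^V\hookrightarrow\mathcal{J}\to S^{2n-2}$ (the target is the unit sphere in $e_{2n-1}^{\perp}$, since a metric-compatible $J$ sends $e_{2n-1}$ to an orthogonal unit vector, and the fiber over $e_{2n}$ is exactly the set of $J$ preserving $\mathrm{span}(e_{2n-1},e_{2n})$). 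Hence $\pi_l(\mathcal{J},\mathcal{J}^V)\cong\pi_l(S^{2n-2})$, which vanishes for $l\le 2n-3$ in one line and with no edge-case analysis; this is also what makes precise your Grassmannian heuristic --- the quotient is the space of oriented $J$-complex $2$-planes through a fixed normal vector, which is $S^{2n-2}$. Your Morse-theoretic reduction of $\Sigma\setminus \mathrm{Int}\, D$ (resp.\ of $\Sigma$ when $\p\Sigma\neq\varnothing$) to a complex of dimension $\le 2n-3$, and the subsequent extension of the homotopy of $J$ from $\Sigma$ to a tubular neighborhood and then all of $W$, match the paper's skeletal induction exactly. The only cosmetic correction: the orientation-compatible, metric-compatible complex structures form $SO(2n)/U(n)$, not $O(2n)/U(n)$; this doesn't affect the argument.
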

\begin{proof}
Choose an almost complex structure $J$ compatible with $\eta$. It is an equivalent problem to deform $J$ into an almost complex structure $\wt J$ for which $T\Sigma$ is $\wt J$-invariant, because then  for any  almost symplectic structure $\wt\eta$ compatible with $\wt J$ the submanifold $\Sigma$ will be almost symplectic.
We construct $\wt J$ inductively over cells of increasing dimension   in some cell-decomposition of $\Sigma$. If $\p\Sigma=\emptyset$ we assume that there is a unique $(2n-2)$-cell. If $\p\Sigma\neq\varnothing$ then $\Sigma$ can be isotoped into an arbitrarily small neighborhood of a $(2n-3)$-dimensional complex $C\subset \Sigma$, and hence it suffices to perturb $J$ to $\wt{J}$ near the cells of $C$.
Suppose that $\sigma\subset\Sigma $ is an $l$-dimensional cell, $l<2n-2$, and that we already have deformed $J$ to make $\Sigma$ $\wt J$-holomorphic near $\p\sigma$. Let us choose two vector fields $e_1,e_2$ which trivialize the co-oriented normal bundle to $T\Sigma\subset TW$ on $\Op\sigma$.
Furthermore, we can assume that $\wt Je_{1}=e_{2}$ on $\Op\p\sigma$.
We will arrange that  $\wt Je_{1}=e_{2}$ on $\Op \sigma$. There is a homotopy $e^t_2$, $t\in[0,1]$, over $\Op \sigma$ and fixed on $\Op\p\sigma$, such that $e^0_2=e_2$, $e^1_2=Je_1$, and so that $(e_{1},e_{2}^{t})$ remain linearly independent for all $t\in[0,1]$. Indeed, the obstruction to this lies in $\pi_l(S^{2n-2})=0$ for $l<2n-2$. Let $R^t$ be a covering homotopy of orientation preserving automorphisms of $TW$ such that $R^t(e_2)=e^t_2$ and $R^t(e_1)=e_1$ for all $t\in[0,1]$.
Let $J^t$, $t\in[0,1]$, be a homotopy of almost complex structures on $W$ such that 
\begin{description}
\item{-} $J^0=J$;
\item{-} $J^t=J$, $t\in[0,1]$, in a neighborhood of the $(l-1)$-skeleton;
\item{-} $J^t=R^t_*J$, $t\in[0,1]$, on $TW_{\Op\sigma}$.
\end{description}
Then $\Op\sigma$ is $J^1$-holomorphic. Continuing this induction construction over all cells $\sigma$ of dimension $\leq 2n-3$ we construct an almost complex structure $\wt J$ with the required properties.
  \end{proof}

 %%%%%%%%%%%%%%%%%%%
  \section{Proof of Theorem \ref{thm:cobord}}\label{sec:proofs}
  Corollary \ref{cor:concordance} implies that, if $n>2$, then it is sufficient to prove Theorem \ref{thm:cobord} in the case when the contact structure $\xi_+$ is overtwisted on each of the components of $\p_+W$, and for $n=2$ it is one of the assumptions that the contact structure $\xi_+$ is overtwisted on at least one of the components of $\p_+W$. Hence, Theorem \ref{thm:cobord} follows, even in a slightly stronger relative sutured cobordism version, from the following result.

\begin{thm}\label{thm:main-sutured}
Let $(W,\p_-W,\p_+W)$ be a sutured cobordism of dimension $2n\geq 4$ endowed with an almost symplectic structure $\eta$. Suppose that $\p_+W,\p_-W \neq\varnothing$. Let $\wt\Lambda$ be a Liouville form on $\Op\p W$ such that the corresponding Liouville vector field $Z$ is inwardly transverse to $\p_-W $ and outwardly transverse to $\p_+W $. Denote $\xi_\pm:=\Ker(\wt\Lambda|_{\p_\pm W})$.
Suppose that
\begin{itemize}
\item[-] $\eta|_{\Op\p W}=d\wt\Lambda$;
\item[-] 
  $\xi_-$ is overtwisted on at least one of the components of $\p_- W$; 
  \item[-]  if $n>2$   then $\xi_+$  is overtwisted on all of the  connected components of $\p_+W$, and if $n=2$ then $\xi_+$  is overtwisted  on at least one of the connected components of $\p_+W$.
  \end{itemize}
   Then there exists a Liouville cobordism structure $\Lambda$  on $W$ such that
   \begin{itemize}
\item  $\Ker(\Lambda|_{\p_\pm W})=\xi_\pm$;
\item $d\Lambda$ is homotopic to $\eta$ through a homotopy of almost symplectic forms and the homotopy can be taken to be fixed on $\p W$.
\end{itemize}
\end{thm}

  \subsection{Reduction to handles}\label{sec:red-to-elem}

We begin the proof of Theorem \ref{thm:main-sutured} with the following observation.
 
\begin{lemma}\label{lm:red-to-handle} Consider  a cobordism  $W$  obtained  by concatenating  two sutured  cobordisms  $W_0$ and   $W_1$. Suppose that Theorem \ref{thm:main-sutured} holds for the cobordisms $W_0$ and $W_1$. Then it also holds for $W$ provided that $\wt\Lambda|_{\p_+W}$ is overtwisted on at least one of the components of $\p_+W$ which intersects $\p_+W_1$.\end{lemma}
 \begin{proof}
 Recall that $\p_+W=\p_+W_1\cup(\p_+W_0\setminus\p_-W_1)$.
 One can modify the forms $\wt\Lambda$ and $\eta$  by an isotopy supported in $\Op\p_+W$ to make $\wt\Lambda|_{\p_+ W_1}$ overtwisted on at least one of the components of $\p_+W_1$. Furthermore, we can deform the Liouville form $\wt\Lambda$ on $\Op(\p_-W_1\cap \p_+W_0)  $, so that it restricts to $\p_-W_1\cap \p_+W_0$ as an overtwisted contact form
 in the formal homotopy class of $\eta$, relative to $\p(\p_{+} W_0\setminus\p_-W_1)$. We can then successively apply  Theorem \ref{thm:main-sutured} to $W_0$ and to $W_1$ to get the required form  $\Lambda$ on $W$.                   
\end{proof}
  
Any cobordism $W$ with $\p_+W\neq\varnothing$ can be presented as a result of successive handle attachments with handles of index $k=1,\dots, 2n-1$, starting with $\p_{-}W$. If $\Morse(W,\p_-W)\leq n$ then Theorem \ref{thm:main-sutured} for $W$ follows from the Weinstein handlebody theory, see Theorem \ref{thm:Weinstein-case}.

In general, the handles can be concatenated in the increasing index order. In particular, $W$ can be obtained by concatenating  handles  $W_1,\dots, W_j$ of index $2n-1$ to a cobordism  $W'$ with connected  boundary $\p_+W'$. By rearranging these handles via handle slides, if necessary, we can assume that  the form $\wt\Lambda$ is overtwisted on at least one of the components of $\p_+W$ which intersects $\p_+W_i$   for each $i=1,\dots, j$. Hence,  Lemma \ref{lm:red-to-handle} implies that it is sufficient to prove Theorem \ref{thm:main-sutured}  for handles. Moreover, the Weinstein handlebody theory, see Theorem  \ref{thm:Weinstein-case} and Corollary \ref{cor:Weinstein-form}, further reduces our task to proving Theorem \ref{thm:main-sutured} for handles of index $>n$.
 
We note, however, that the construction below works for all handles of index $k>1$, and in a slightly modified way for $k=1$ as well.
  \begin{lemma}\label{lm:fixing-eta}
Let $(W,\Lambda)$ be a  sutured Liouville cobordism structure on a handle $W$ of index $k$, and let $\eta$ be an almost symplectic form which coincides with $d\Lambda$ on $\Op\p W$. Suppose that the contact form $ \Lambda|_{\p W}$ is overtwisted on at least one of the connected components of either $\p_-W$ or $\p_+W$.  Then $W$ admits a Liouville cobordism structure $\wt\Lambda$ such that $d\wt\Lambda$ and $\eta$ are homotopic {\em relative} to $\p W$. 
\end{lemma}

\begin{proof}
 The  space of almost symplectic structures is homotopy equivalent to  the space of almost complex structures, and hence, the difference of the relative homotopy classes of $d\Lambda$ and $\eta$ can be viewed as  an element   $a\in\pi_{2n}(SO(2n)/U(n))$ (recall that $(W,\p W)$ is homeomorphic to $(D^{2n},S^{2n-1})$). Considering $D^{2n}$ as $D^{2n-1}\times [0,1]$, the class $a$ can be viewed as a loop of $SO(2n)/U(n)$-valued functions on $D^{2n-1}$ which are constant on $\Op\p D^{2n-1}$. In turn, this loop can be viewed as a loop of almost contact structures on $D^{2n-1}$.
 
  Using Theorem \ref{thm:ot-properties}  we can realize this loop as a loop of overtwisted contact forms  $\alpha_t$ on $D^{2n-1}$,  $t\in[0,1]$, fixed on $\Op\p D^{2n-1}$.
For a sufficiently large $C$ the form  $\lambda:=e^{Ct}\alpha_t$  defines a Liouville cobordism structure on the trivial cobordism
   $D^{2n-1}\times[0,1]$. Let $P$ be a sutured version of this cobordism.
  By our assumptions,  the contact form $\Lambda$ is overtwisted on one of the boundary components $V\subset \p_{+} W$ or $V\subset \p_{-} W$. Let us assume for determinacy that $V\subset \p_+W$. Using Lemma \ref{lm:ot-attaching},
  we construct a contact embedding $h:(\p_-P,\Ker\lambda_{\p_-P})\to (V,\Ker(\Lambda|_V))$. Furthermore, using Lemma \ref{lm:changing-form} we can adjust the Liouville form $\Lambda$ on $W$ to have
  $h^*\Lambda=\alpha_0$. Hence, we can use $h$ to construct a cobordism  $(W',\Lambda')=(W,\Lambda)\mathop{\cup}\limits_h(P,\lambda)$ by concatenating $P$ and $W$. There is an isotopy of $g_t:W\to  W'$ fixed outside $\Op\p_+W\subset W'$, moving points along trajectories  of the Liouville field $Z'$ corresponding to the Liouville form $\Lambda'$, such that $g_0:W\hookrightarrow W'$ is the inclusion and $g_1(W)=W'$.
 Then the Liouville form $\wt\Lambda:= g_1^*\Lambda'$ on $W$ has the required properties (i.e.\ we have ``killed'' the previous obstruction $a\in \pi_{2n}(SO(2n)/U(n))$). The  proof for the case when an overtwisted component is contained in $\p_-W$   works in a  similar way with an inverse concatenation of $P$ and $W$ via a contact embedding $\p_+P\to\p_-W$. 
\end{proof}
%%%%%%%%%%%%%%%%%%%%%%%%%%%%%%%%%%%%%%%%%%%%%%%%%%%%%%%%%%%%%%%%%%

\subsection{Beginning of the construction}\label{sec:beginning}

In what follows, we will be proving Theorem \ref{thm:main-sutured} in the case when $W$ is a handle of index $k>1$, i.e.\ $W=D^k\times D^{2n-k}$, $\p_-W=\p D^k\times D^{2n-k}, \p_+W=D^k\times\p D^{2n-k}$.
Moreover, in view of Lemma \ref{lm:fixing-eta} it will be sufficient to construct the form $\Lambda$ without caring about the relative homotopy class of $d\Lambda$.
The proof  will be done by induction over  $n$. The case $n=1$ is trivial.  Suppose $n\geq 2$ and that the theorem is already established for cobordisms of dimension $<2n$.

Let $\xi_+$ be the contact structure $\Ker(\wt\Lambda|_{\p_+W})$ on $\p_+W=D^{2n-k}\times\p D^k$. Assuming that $D^k$ and $D^{2n-k}$ are discs of radius $1$,  consider the equatorial disk $\Delta:=D^{k-1}(\frac12)\subset D^k$ of radius $\frac12$ and the codimension $2$ submanifold $\Sigma:=\p\Delta\times D^{2n-k}\subset W$ with boundary $\p\Sigma=\p\Delta\times\p D^{2n-k}\subset \p_+W$. 
There exists a diffeotopy $h_t:W\to W$,  $t\in[0,1]$, such that $h$ is fixed on $\Op\p_{-}W$, $h_0=\Id$ and  $h_1(\p\Sigma)$ is a contact submanifold  of $(\p_+W,\xi_+)$ (which in the case $n=2$ means that
 $h_1(\p\Sigma)$ is transverse to contact structure $\xi_+$). If $n>2$ or if $n=2$ and the index $k$ of the handle is $<3$ then we can also arrange that $\xi_+$ is overtwisted on (every component of) the complement of $h_{1}(\p\Sigma)$.
Indeed, this follows from Theorem \ref{thm:ot-properties}  and Lemma \ref{lm:ot-attaching},
because $\wt\Lambda|_{\p_+W}$ is overtwisted on each of the components  of $\p_+W$. If $n=2$ and $k=3$ then we can isotope each that  component of $h_{1}(\p\Sigma)$  to a  curve  transverse to the contact structure $\Ker(\wt\Lambda|_{\p_{+}W})$. To simplify the notation we will now write $\Sigma$ for the deformed submanifold $h_1(\Sigma)$.

The normal bundle to $\Sigma$ in $W$ is trivial, and, according to the contact normal neighborhood theorem, there exists a splitting $\wt N=\Sigma\times D^2$  of a tubular neighborhood of $\Sigma$ such that, if we define $\wt M_\ext:=\wt N\cap\p_+W=\p\Sigma\times D^2$, then the contact structure $\xi_+$ admits a contact form $\alpha_+$ satisfying $\alpha_+|_{\wt M_\ext}=\delta+udt$, where $\delta$ is a contact form on $\p\Sigma$ and $(\sqrt{u},t)\in([0,1]\times\R/(2\pi\Z))$ are coordinates on $\wt N$ induced from polar coordinates on the factor $D^2$. If $n>2$, we can also arrange that $\xi_+$ is overtwisted on (every component of) the complement of $\wt M_{\ext}$.

\begin{lemma}\label{lm:ext-closed-form}
The form $dt$ extends   from $\wt N\setminus \Sigma$ to $W\setminus\Sigma$ as a closed $1$-form  $\tau$
   supported away from $\Op\p_-W $.
\end{lemma}

\begin{proof}
Note that  $(\Sigma,\p\Sigma)$ represents a trivial homology class in $H_{2n-2}(W,\p_+W)$. This implies that
   a circle $S:=x\times \p D^2, x\in\Sigma, $ represents a non-zero homology class  in $H_1(W\setminus\Sigma,\p_-W;\R)$. 
Indeed, if there existed a relative chain $C$ in $(W\setminus \Sigma,\p_{-}W)$ bounding $S$, then the union of $C$ with the normal fiber $x\times D^2$ would form a relative 2-cycle $\Gamma$ transversely intersecting $\Sigma$ at 1-point, and hence the Poincar\'e-dual cohomology class $P\Gamma\in H^{2n-2}(W,\p_+W)$ would evaluate non-trivially on $\Sigma$, contrary to the triviality of the homology class $[\Sigma]\in H_{2n-2}(W,\p_+W)$. Hence, there exists a cohomology class $a\in H^1(W\setminus\Sigma,\p_-W;\R)$ such that $a([S])=1$ and $a|_{\Sigma\times y}=0.$ Therefore, $a|_{\wt N}=[dt]$ and hence $dt$ extends as a closed 1-form $\tau:=a+d f$ for some smooth function $f$ vanishing on $\Op \p_{-} W$.
\end{proof}

 Using Lemma \ref{lm:almost-complex} we  deform the  almost symplectic structure $\eta$ to make $\eta|_\Sigma$ almost symplectic.

   \subsection{The case $n=2$}
  Consider first the case $n=2$.
For a sufficiently large constant $C$ the form  $C\alpha_+ -cdt$ is contact for all $c\in[0,1]$.
 Let us consider a smaller tubular neighborhood $N=\{u\leq\frac1{2C}\}\subset \wt N$. Denote $v=Cu$, so that we have $N=\{v\leq\frac 12\}$ and 
 $C\alpha_+|_N=C\delta+vdt$.
 To simplify the notation we rename $C\alpha_+$  and $C\delta$ back to $\alpha_+$ and $\delta$.

   If $n=2$ then $\Sigma$ is a $2$-dimensional surface with boundary, and hence,  the form $\delta$  can be extended as a Liouville form  $\lambda$ to $\Sigma$  such that  the corresponding Liouville vector field transverse to $\p\Sigma$ in the outward sense.
   Denote $M_\ext:=\p N\cap \p_+W,\; M_\intr=\p N\setminus M_\ext,$ 
   see Fig. \ref{fig:FigN}.

Consider the  Liouville $1$-form 
 $\mu:=(v-1)dt+ \lambda$     on $ N$.  We have $\mu|_{M_\ext}=(\alpha_+-\tau)|_{M_\ext}$ and the  Liouville field  
corresponding to $\mu$ points into $N$ along $M_\intr$, and out of $N$ along $M_\ext$.  Note  that while   $\mu$ blows up along $\Sigma$, the symplectic form $d\mu$ extends to $  N$ as a symplectic form, and the extended form coincides with $d\lambda$ over $\Sigma$.

\begin{figure}[h]
\begin{center}
 \includegraphics[scale=.38]{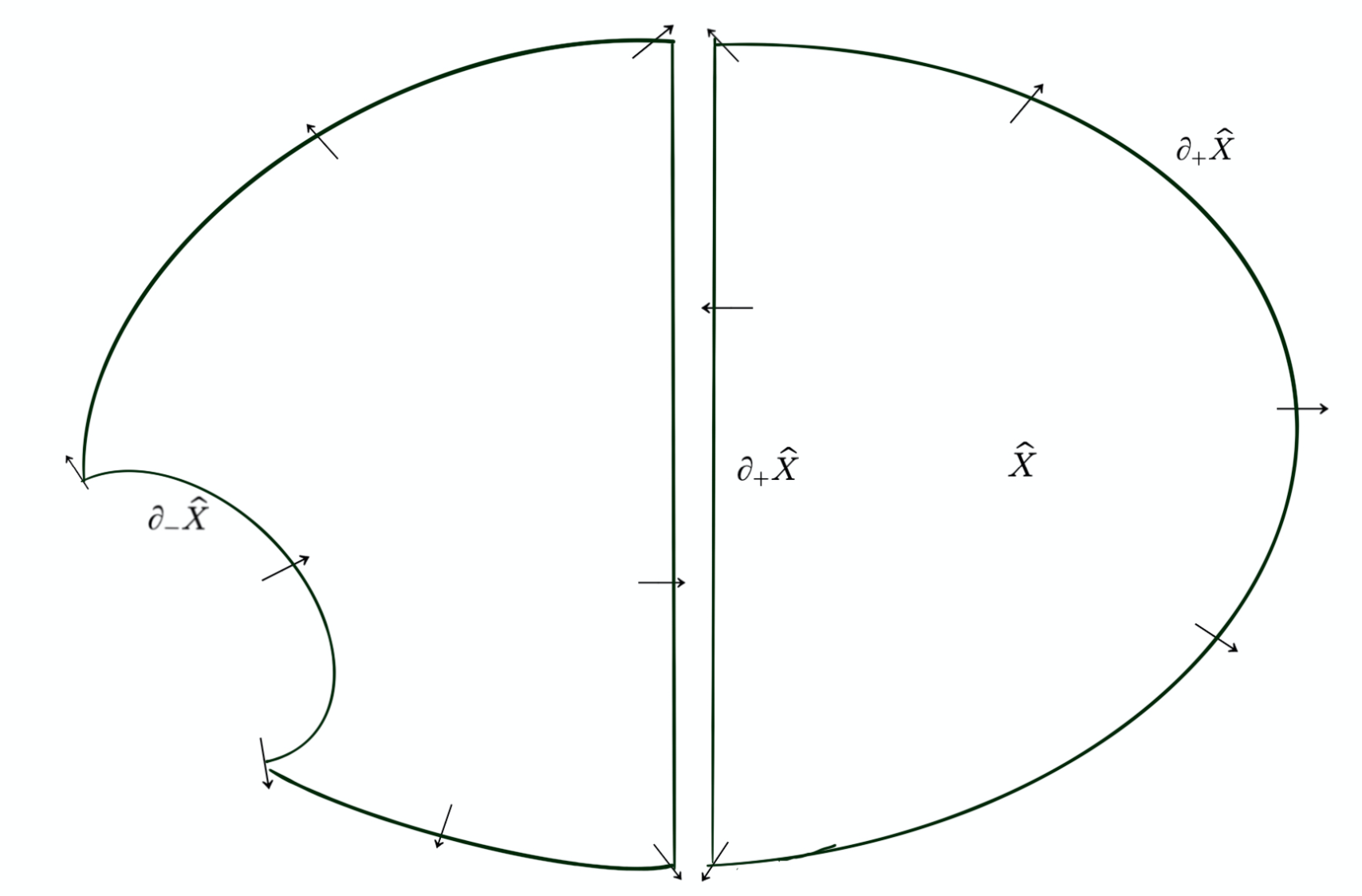} 
\caption{Cobordism $\wh X$}
\label{fig:cobwhX}
\end{center}  
\end{figure}

Consider the sutured cobordism $$\wh X=W\setminus N, \p_-\wh X=\p_-W,\p_+\wh X=(\p_+W\setminus M_\ext)\cup M_\intr,$$ see
Fig. \ref{fig:cobwhX}.

 Lemma \ref{lm:subtr-handle}, see Fig. \ref{fig:drilling2},  implies 
  that $\Morse(\wh X,\p_-\wh X)$ is equal to $1$.\footnote{In the original version of the paper the Morse type  of $(W\setminus\Sigma, \p_-W)$ was stated incorrectly. We thank the anonymous referee for pointing out our mistake.}   
  Consider  a Liouville form $\wh \Lambda$ on $\Op\p_+\wh X$ which restricts  as $\alpha_+-\tau$ to $\p_+W\setminus M_\ext$ and which equals to $\mu$ on $  M_\intr$.

By applying Theorem \ref{thm:Weinstein-case} we can extend $\wh\Lambda$ to    a Weinstein cobordism   form $\wh \Lambda$ on  $\wh X$ which  restricts as $f\alpha_-$ to $\p_-W$, and which is in the formal  rel. $\p_-W$  homotopy class of $\eta$.   We can then extend $\wh\Lambda$ to $W\setminus \Sigma$ as equal to $\mu$ on $N\setminus\Sigma$.
Then the form $\Lambda=\wh\Lambda+\tau$  is the required Liouville cobordism structure on $W$. Its relative to $\p W$ homotopy class can be fixed using Lemma  \ref{lm:fixing-eta}, and  this concludes the proof    of Theorem \ref{thm:main-sutured}  in the case $n=2$.
  
        %%%%%%%%%%%%%%                                                                                                                                                                                                                                                                                                                                                                                                                                                                                                                                                                                                                                                                                                                                                                                                                                                      
 \subsection{Case $n>2$}
  
\begin{figure}[h]
\begin{center}
 \includegraphics[scale=.50]{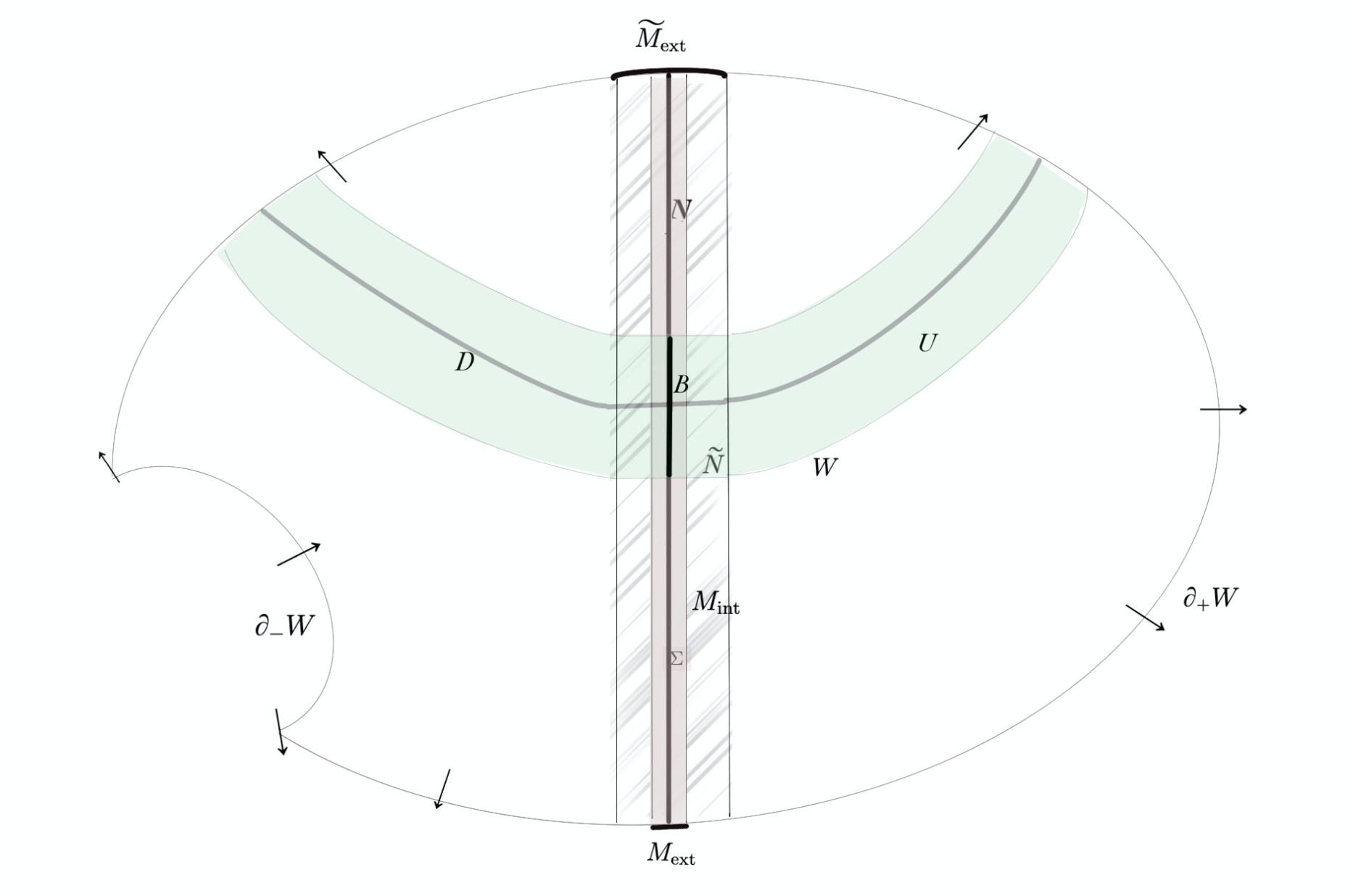} 
\caption{Hypersurface $\Sigma$, disc $D$, and their neighborhoods}
\label{fig:FigN}
\end{center}  
\end{figure}

   %%%%%%%%%%%%%%%%%%%%%%%
 Suppose $n> 2$.
     Choose an embedded ball $B\Subset\Int\Sigma$ centered at a point $p\in\Sigma$  and consider an overtwisted contact form $\beta$ on $\p B$ in the formal homotopy class determined by $\eta$.
 Let   $(D,\p D)\subset (W,\p W)$  be an embedded $2$-disc such that  
  $D\cap \wt N=p\times D^2$.    We extend the polar coordinates $(\sqrt{u},t)$ to the disc $D$ such that $u|_{\p D}=K$, where the constant $K>1$ will be chosen later.
  
   Let  $U =  D\times B   $ be  a split  tubular neighborhood of $D$ such   that $U\cap\Sigma=B$ where $B \subset\Sigma$ is the ball chosen above,  and the splittings of $U$ and $\wt N$ agree on $\wt N\cap U$, see Fig. \ref{fig:FigN}.
     We  also assume that   $B\times\p D\subset\p_+W$.  Choosing a radial  coordinate $s$   on $B$, we extend  the coordinates $s$ and  $(u,t)$ to $U$  by pulling them    back to $U=D\times B$ via  projections to the second and first factors, respectively.    
     For $\sigma\in(0,1]$ we denote $$U^\sigma:=
 \{s<\sigma\}, \;  H_\ext^\sigma=\p U_\sigma\cap\p_+W, \; H_\intr^\sigma:=\p U\setminus H_\ext^\sigma$$
 and  will write $H_\ext$ and $H_\intr$ instead of $H_\ext^1$ and $H^1_\intr$, see Fig. \ref{fig:InsideU}.

 The closed forms $\tau$ and $dt$ on $U$ are in the same cohomology class, and hence we can modify $\tau$ to  make it equal to $dt$ on $(N\cup U)\setminus \Sigma$.

  \begin{lemma}\label{lm:adding-closed-form2}
 There exists a contact form $\alpha$ on
 $\p_+W\setminus\p D$ with the following properties:
     \begin{enumerate}
  \item  $\alpha=C\alpha_+$ on $\wt M_\ext\cup\Op\p^2W$ for a constant $C\geq 1$; in particular, $\alpha|_{\wt M_+} = Cudt+ C\delta$;    
   \item the contact structures $\Ker\alpha$ and $\xi_+|_{\p_+W\setminus\p D}$ are homotopic relative  $\wt M_\ext\cup \Op\p^2W$;
   \item $ \alpha|_{   H^{\frac12}_\ext\setminus\p D} =2dt+s\beta$  
  \item $\alpha -c\tau$ is contact for any $c\in[0,1]$.
 \end{enumerate}
 \end{lemma}

 \begin{proof}
 Thanks to our choice of $\beta$ on $\p B$ in the formal homotopy class determined by $\eta$, we can use   Theorem \ref{thm:ot-properties} to    construct a contact form $\alpha'$ on $\p_+W\setminus \p D$ which satisfies  conditions (i) and (ii) and which is equal to $2dt+s\beta$ on $H_\ext\setminus\p D$.     By multiplying $\alpha' $  by a sufficiently large constant $C$  we can   ensure that  
 $C\alpha'-c\tau$ is contact for all $c\in[0,1]$.
 We claim that there exists a $C^\infty$-function $h:[0,1]\to[1,C]$ such that 
 \begin{itemize}
 \item[$\dagger$] $h(s)=1$ for $s\in[0,\frac12]$;
 \item[$\dagger$]   $h(s)=C$ near $s=1$ and
  \item[$\dagger$]     the form $h(s)(2dt+s\beta)-cdt$  is contact on $H_\ext\setminus \p D$ for all $c\in[0,1]$. 
  \end{itemize}
 Indeed,  let us first observe that it is sufficient to verify the latter condition for $c=1$.
 Indeed, let $R$ be the Reeb vector field of the form $h(s)(2dt+s\beta)$. The form $h(s)(2dt+s\beta)-cdt$ is contact if $cdt(R)\neq 1$ everywhere in $H_\ext\setminus \p D$. But on $H^{\frac12}_\ext\setminus \p D$ we have $R=\frac12\frac{\p}{\p t}$ and $dt(R)=\frac 12$. Hence, if  the form $h(s)(2dt+s\beta)-dt$ is contact everywhere then $dt(R)<1$, and thus, $cdt(R)<1$ as well.
The contact condition for the form $h(s)(2dt+s\beta)-dt=(2h(s)-1)dt+sh(s)\beta$ is equivalent to the inequality
 $\frac{d}{ds}\left(\frac{s h(s)}{2h(s)-1}\right)>0$.
Take a $C^\infty$-function $\psi:[0,1]\to\R$ such that $\psi(s)=s$ for $s\in[0,\frac12]$, $\psi(s)= \frac{Cs}{2C-1}$ near $s=1$,  and which satisfies inequalities $2\psi(s)>s$ and $\psi'(s)>0$ for all $s\in[0,1]$. Then the function
  $h(s)=\frac{\psi(s)}{2\psi(s)-s}$ has the  required properties. Indeed, we have $h(s)=1$ for $s\in[0,\frac12]$ and $h(s)=C$ near $s=1$. We also have   $\frac{s h(s)}{2h-1}=\psi(s)$, and hence the form $$ h(s)  (2dt+s\beta)-dt=(2h(s)-1) dt+\frac{s h(s)}{2h(s)-1}\beta =(2h(s)-1)(dt+\psi\beta)$$ is contact because $\psi'(s)>0$. Hence, 
  the form $$
  \alpha:=\begin{cases}
  h(s)(2dt+s\beta),&\hbox{on}\;H_\ext\setminus \p D;\\
    C\alpha',&\hbox{on}\; \p_+W\setminus H_{\ext}\\
    \end{cases}
    $$
has the required properties.
  \end{proof} 
  
If the constant $C$ in Lemma \ref{lm:adding-closed-form2} is chosen large enough then the induction hypothesis allows us to find a Liouville form   $\lambda$ on $\Sigma\setminus p$  with a conical singularity at $p$, and such that
\begin{itemize}
\item $\lambda|_{\p\Sigma}=C\delta$;
\item the Liouville field of 
$\lambda$ is outwardly transverse to $\p\Sigma$;
\item    $\lambda|_{B}=s\beta$ and 
\item $d\lambda|_{\Sigma\setminus B}$ is in the almost symplectic homotopy class of $\eta$. 
\end{itemize}

We now  fix the constant $K=u|_{\p D}$ to be equal to $2C$.
Denote \begin{align*}
&v:=Cu,\;\;  N:=\left\{v<\frac12\right\}\subset\wt  N,\;\;  M_\intr:=
\left\{v=\frac12\right\},\;\;
M_\ext=N\cap\p_+W,\\
&P :=\left\{\max\left(1-v, \frac{v-1}2\right)\leq s\leq \frac12\right\}.
\end{align*}
 We have $\p N=M_\intr\cup M_\ext$ and $H_\ext=\{v=2,s<1\}$.

 \begin{figure}[h]
\begin{center}
 \includegraphics[scale=.43]{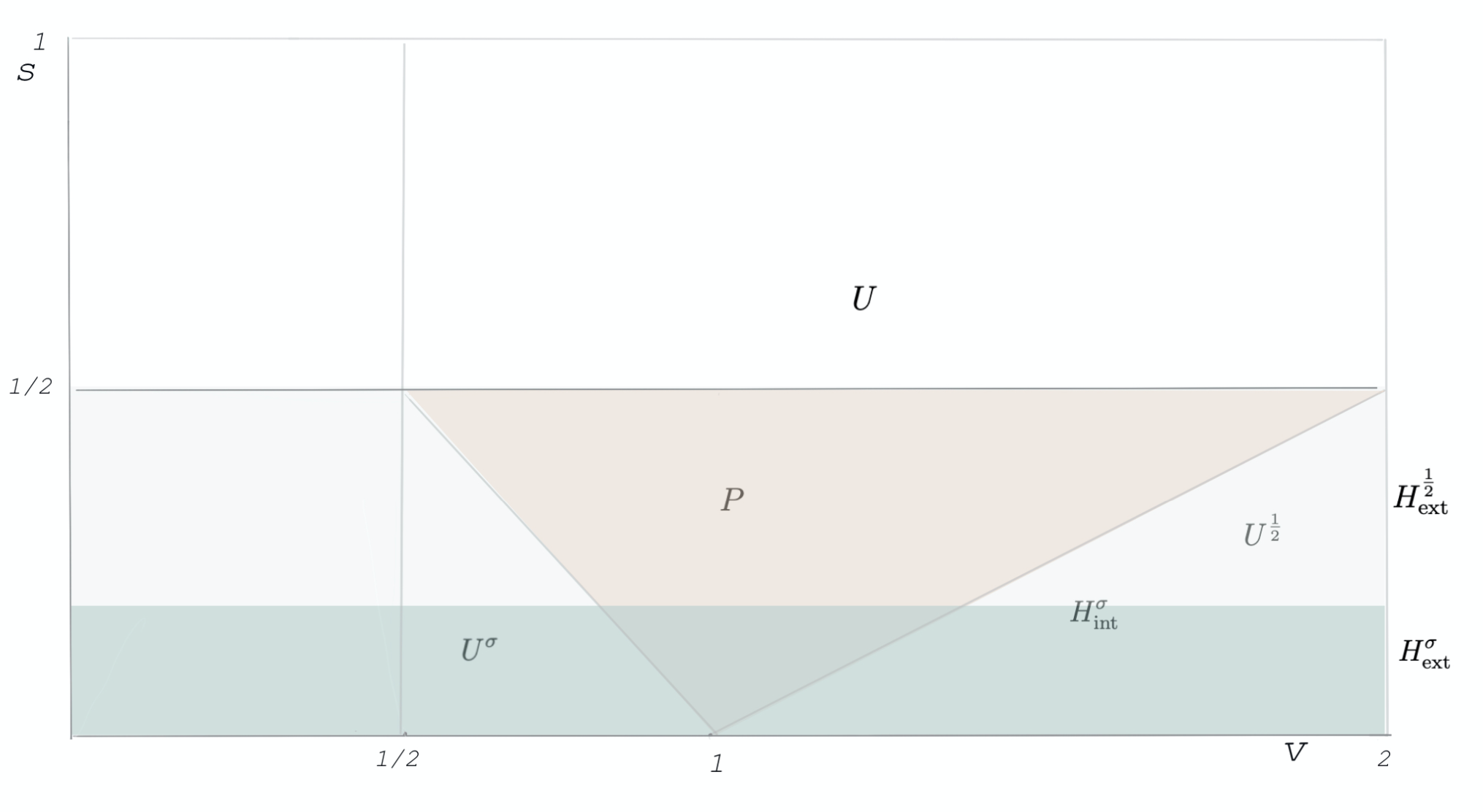} 
\caption{The inside of the neighborhood $U$}
\label{fig:InsideU}
\end{center}  
\end{figure}

 \begin{prop}\label{prop:main3}
For a sufficiently small $\sigma\in(0,\frac12)$ 
 the sutured cobordism $$(  X^\sigma:=W\setminus U^\sigma,\p_- X^\sigma:=\p_-W\cup H^\sigma_\intr ,\;\p_+  X^\sigma:=\p_+W\setminus H^\sigma_\ext)$$ 
 admits a Liouville cobordism structure $ \Lambda^\sigma$ such that
 \begin{itemize}
 \item$ \Lambda^\sigma|_{\Op H^\sigma_\intr}=s\beta+vdt;$
 \item $\Lambda^\sigma|_{\p_+X^\sigma}=\alpha$;
 \item $\Ker(\Lambda^\sigma|_{\p_-W})=\xi_-$. 
 \end{itemize}   \end{prop}
 
 Before proving this proposition we deduce from it Theorem  \ref{thm:main-sutured}.
 
\begin{figure}[h]
\begin{center}
 \includegraphics[scale=.50]{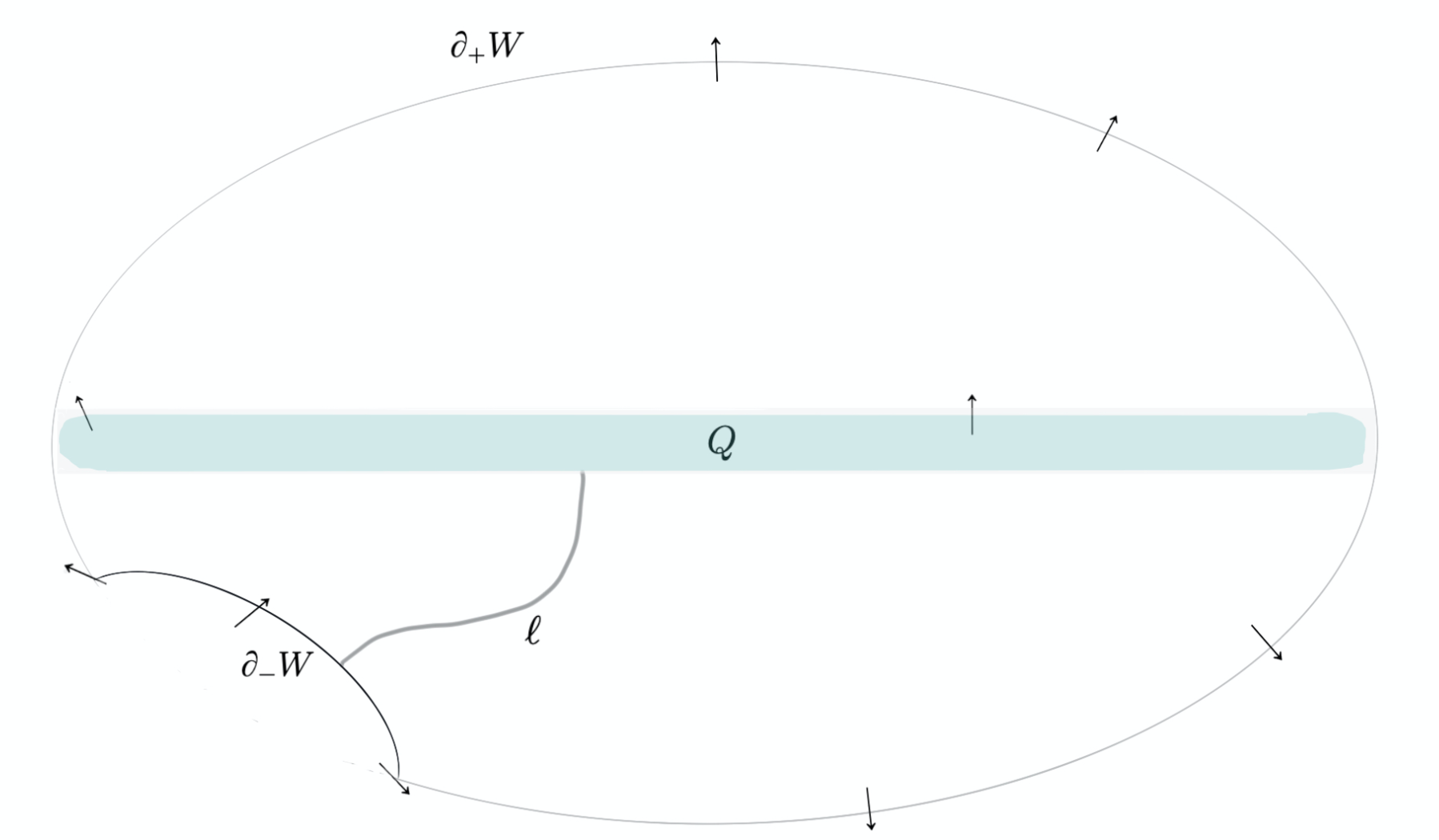} 
\caption{Connect summing of $\p_-W$ and $Q$}
\label{fig:connect-sum}
\end{center}  
\end{figure}

\begin{proof}[Deducing Theorem  \ref{thm:main-sutured} from Proposition \ref{prop:main3}]
Using Theorem \ref{thm:ot-properties} we can  extend   $\Lambda^\sigma$ to   a neighborhood  $\Omega\supset H^\sigma_\ext$ so that $ \Lambda^\sigma|_{\p_+W}$ is a contact form and the contact structure  $\Ker(\Lambda^\sigma|_{\p_+W})$ on    $\p_+W$  is 
 homotopic to the contact structure $\xi_+$.     Take an embedded $2n$-ball $Q\subset  \Int U^\sigma $ with boundary $\p Q\subset \p U^\sigma\cup\Omega$ transverse to the Liouville field of the form $\Lambda^\sigma$. The   extended this way form $\Lambda^\sigma$   defines a Liouville cobordism structure  on $W':=W\setminus \Int Q$ with $\p_-W'=\p_-W\cup \p Q, \p_+W'=\p_+W.$

   Our next goal
is   to   modify the cobordism  $W'$ by connect summing $\p_-W$ and $\p Q$, see Fig. \ref{fig:connect-sum}. To do that,  choose an   embedded  arc $\ell$  connecting     $\p_-W$  with   $\p Q$,  which coincides  near $\p\ell$ with a flow line  of  the Liouville field of $\Lambda^\sigma$ and such that $\int\limits_\ell\Lambda^\sigma=0$. We can modify the form   $\Lambda^\sigma$ by adding an exact form $dH$ supported in $\Int W$ to make it vanishing on $\ell$.
   By subtracting  from $W'$ the Weinstein handle $H_1$ with the core disc $\ell$, see  Section \ref{sec:surgery}, we get  a Liouville  cobordism $W''$ whose new negative boundary is the connected sum   $\p_-W\# 
   \p Q$ along $\ell$. Finally we note that there exists a fixed near $\p_+W$ isotopy $\rho_\tau:W'\to
  W$, $\tau\in[0,1]$,   such that $\rho_0=\Id$ and $\rho_1(W)=W''$. The pull-back Liouville form $\Lambda:=\rho_1^*(\Lambda^\sigma)$   is a Liouville cobordism  structure on $W$ which restricts to  $\p_\pm W$  as   an overtwisted contact form   in the formal homotopy class of $\xi_\pm$, and hence, in view of Theorem  \ref{thm:ot-properties}, the contact structure $\Ker(\Lambda|_{\p_\pm}) W$ is isotopic to $\xi_\pm$.  \end{proof}
  
    \subsection{Proof of Proposition \ref{prop:main3}}
  \begin{lemma}\label{lm:X-type}
   Let
   $(X,\p_-X ,\p_+X)$ be  the sutured cobordism
\begin{align*}
&X:=  W\setminus(U^{\sigma}\cup N),\\
 &\p_-X:=(H^{\frac12}_\intr\setminus N) \cup \p_-W,\;\;\p_+X:=    (\p_+W  \sm M_\ext)   \cup (M_\intr\setminus U^{frac12}).
      \end{align*}
   Then $\Morse( X,\p_-X)\leq 3$.
    \end{lemma}
    \begin{proof}
   As in the case $n=2$,  Lemma \ref{lm:subtr-handle} implies that   $\Morse(\wh X,\p_-\wh X)=1$  for   the  sutured cobordism $\wh X=W\setminus N, \p_-\wh X=\p_-W,\p_+\wh X=(\p_+W\setminus M_\ext)\cup M_\intr$,
  see Fig. \ref{fig:drilling2} and Fig. \ref{fig:cobX}.  
  
\begin{figure}[h]
\begin{center}
 \includegraphics[scale=.45]{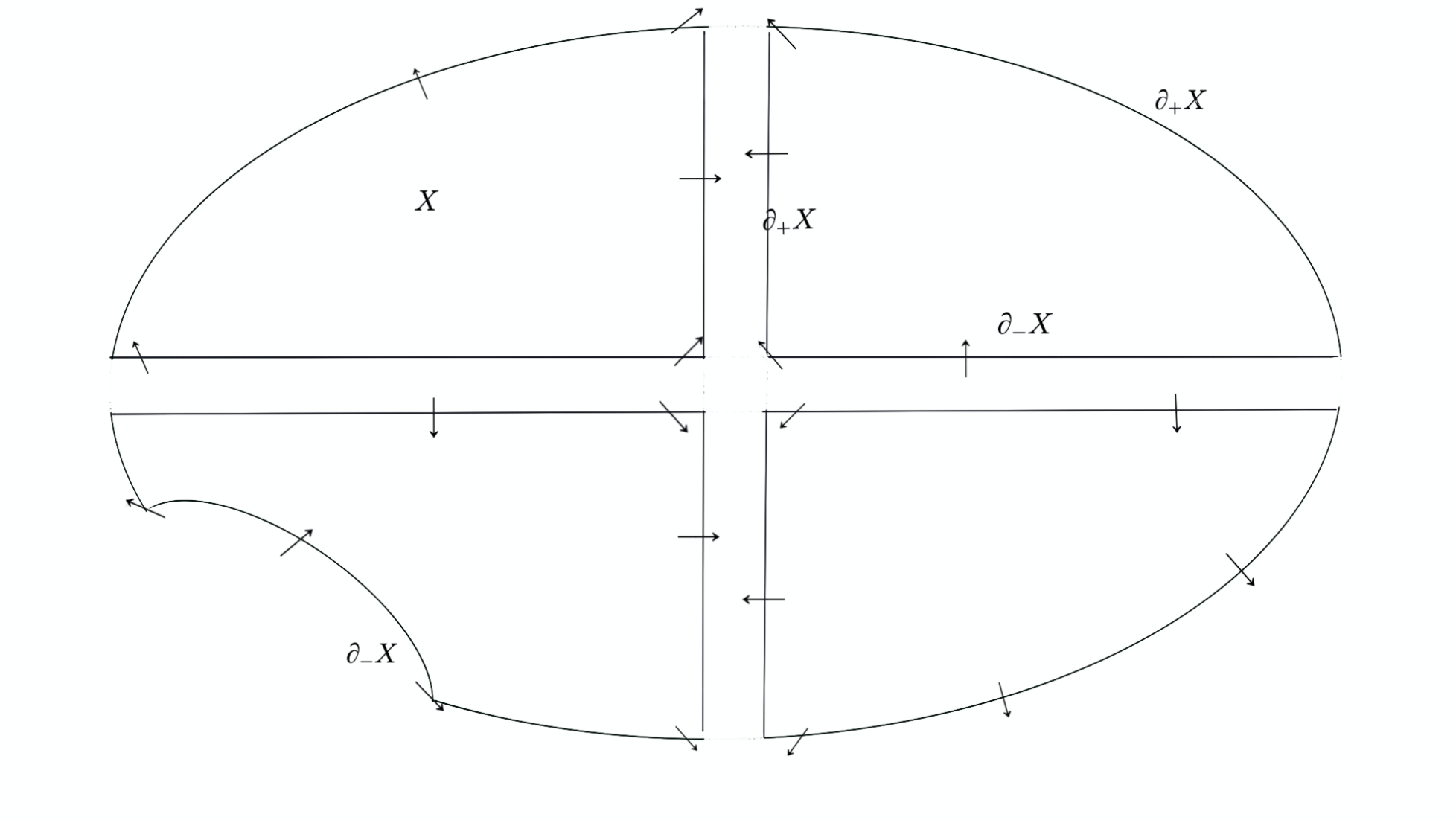} 
\caption{Cobordism X}
\label{fig:cobX}
\end{center}  
\end{figure} 
Hence     the pair $(\wh X,\p_+\wh X) $  is $(2n-2)$-connected.
Taking into account that $n\geq 3$ we conclude that the annulus 
$A=\{\frac12\leq v\leq 2,\,s=0\}\subset D$ is isotopic in $\wh X$ relative $\p A$ to an annulus  contained in  $\p_+\wh X$.  Moreover, there exists an embedding of the sutured version $\wh A$ of the trivial cobordism $A\times I$ into $\wh X$ such that $\p_-\wh A=A$ and $\p_+\wh A\subset \p_+\wh X$. Indeed, the general position argument provides us with an embedding if $n>3$ and an  immersion if $n=3$. But any self-intersection point can be pushed to the boundary, i.e. one can get rid of the self-intersection points by changing the embedding class of the boundary $\p_+\wh A\subset \p_+W.$

The cobordism  $(X, \p_-X,\p_+X)$ can be  obtained  from the cobordism  $(\wh X,\p_+\wh X,\p_-\wh X)$ by removing   a tubular neighborhood of $ A$ and adjoining the boundary of this neighborhood to $\p_+ X$, as it described in  Lemma \ref{lm:remove}. Hence, we can apply that lemma to  conclude that $$\Morse(X, \p_-X)\leq\max(\Morse(\wh X,\p_-\wh X),\Morse(A)+1)=     3.$$
    \end{proof}

   \begin{proof}[Proof of Proposition \ref{prop:main3}] 
 %%%%%%%%%%%%%%%%%%%%

%\begin{figure}[h]
%\begin{center}
%\includegraphics[scale=.35]{CobordW4}
%\caption{The structure of the cobordism $W$.}
%\label{fig:big}
%\end{center}  
%\end{figure}

 %\begin{figure}[h]
%\begin{center}
%\includegraphics[scale=.6]{CobXhat.pdf}
%\caption{The inside of the neighborhood $U$.}
%\label{fig:Xhat}
%\end{center}  
%\end{figure}  

Consider the form $\mu$ on $(U\setminus D)\cup(N \setminus  \Sigma)$ which is equal to $\lambda+(v-1)dt$ on $N\setminus\Sigma$ and equal to $s\beta+(v-1)dt$ on $U\setminus D$. 
Lemma \ref{lm:X-type} allows us to  apply  
  Corollary \ref{cor:Weinstein-form} to construct a  sutured Liouville cobordism structure  $\wh\Lambda$ on $(X,\p_-X,
\p_+X)$  such that       \begin{itemize}
    \item[-] $\wh \Lambda=\mu$ on $\Op(\p N\cap X)$;
    \item[-] $\Ker(\wh\Lambda|_{\p_-W})=\xi_-$;
    \item[-] $\wh\Lambda|_{\p_+W\setminus (M_\ext\cup H^{\frac12}_\ext)}=\alpha-\tau $;
    \item[-] $\wh\Lambda|_{H^{\frac12}_\intr}=\phi\mu$ for  a function  $\phi: H_\intr^{\frac12}\to(0,1]$    such that   $\phi|_{\Op\p H_\intr^{\frac12}}=1$.
    \end{itemize}
Note  that the Liouville vector field $Z$  corresponding to the form $\mu=(v-1)dt+s\beta$ on     $U\setminus D$   is equal to $s\frac{\p}{\p s}+(v-1)\frac{\p}{\p v}$,
 and therefore, its negative flow  flow  $Z^{\nu}, \nu\leq 0$  is given by the formula $$Z^\nu(v,s)=(1+e^{\nu}(v-1),e^{\nu} s), \;\;\nu\leq 0,, $$ and  hence, it
  leaves invariant  the cone 
 $P :=\left\{\max\left(1-u, \frac{u-1}2\right)\leq s\leq \frac12\right\}, $ see Fig.~\ref{fig:InsideU}.
 
 Define an isotopy
    $$\alpha_\nu: H^{\frac12}_\intr\to   P ,\;\nu\in [0,1],$$    by the formula 
 $$\alpha_\nu(x)=Z^{\nu\ln\phi(x)},\; x\in   H_\intr,\nu\in[0,1].$$
We have 
$\alpha_0=\Id $, $\alpha_1^*\mu=\phi\mu$ and 
   $\alpha_\nu(H_\intr)\cap \overline{U^\sigma}=\varnothing$ for   a sufficiently small $\sigma>0$  and  all $\nu\in[0,1]$.

Let us extend the isotopy $\alpha_\nu$   to an isotopy $\overline\alpha_\nu:X\to X^\sigma= W\setminus U^\sigma$ which is fixed   on $(\Op\p W)\cap X$. Note that  the push-forward form $ (\overline\alpha_1)_*\wh\Lambda$  coincides with $\mu$ on $\Op\alpha_1(H_\intr)$. Hence, we can define  a Liouville form $\oLambda$ on $X^\sigma=W\setminus (U^\sigma\cup\Sigma)$ by setting it  equal to 
 $  (\overline\alpha_1)_*\wh\Lambda$ on   $\overline{\alpha}_1(X)$, and equal to $\mu$ elsewhere on  $X^\sigma\setminus\Sigma$.
It remains to observe that the form $\Lambda^\sigma:= \oLambda+\tau$ extends  smoothly to $X^\sigma$ and has the required properties. This concludes the proof of Proposition \ref{prop:main3}.
\end{proof}
 %%%%%%%%%%%%%%%%%%%% 

\end{document}